\numberwithin{equation}{section}
\numberwithin{figure}{section}
\theoremstyle{plain}
\newtheorem{thm}{\protect\theoremname}[section]
\theoremstyle{plain}
\newtheorem{cor}[thm]{\protect\corollaryname}
\theoremstyle{remark}
\newtheorem{rem}[thm]{\protect\remarkname}
\theoremstyle{plain}
\newtheorem{lem}[thm]{\protect\lemmaname}
\theoremstyle{definition}
\newtheorem{defn}[thm]{\protect\definitionname}
\theoremstyle{plain}
\newtheorem{prop}[thm]{\protect\propositionname}
\theoremstyle{remark}
\newtheorem*{rem*}{\protect\remarkname}
\definecolor{red}{rgb}{1,0,0} 
\definecolor{green}{rgb}{0,1,0} 
\definecolor{blue}{rgb}{0,0,1} 
\definecolor{darkblue}{rgb}{0,0,0.6}
\definecolor{darkred}{rgb}{0.6,0,0}
\newcommand{\eps}{\varepsilon}  
\newcommand{\vf}{\varphi}
\newcommand{\md}{\mathrm{d}}   
\newcommand{\vd}{\,\md}
\newcommand{\me}{\mathrm{e}}   
\newcommand{\R}{\mathbb{R}}   
\newcommand{\PS}{\Omega}  
\newcommand{\E}{\mathbb{E}}  
\newcommand{\Prob}{\mathbb{P}}  
\newcommand{\Filt}{\mathcal{F}}  
\newcommand{\ext}{\tau}  
\newcommand{\hit}{\gamma}  
\newcommand{\bd}{\lambda}  
\DeclareMathOperator*{\osc}{osc}
\providecommand{\corollaryname}{Corollary}
\providecommand{\definitionname}{Definition}
\providecommand{\lemmaname}{Lemma}
\providecommand{\propositionname}{Proposition}
\providecommand{\remarkname}{Remark}
\providecommand{\theoremname}{Theorem}
\begin{document}

\begin{frontmatter}{}

\title{Krylov--Safonov estimates for a degenerate diffusion process\footnote{F. Zhang was partially supported by National Natural Science Foundation
of China (Grants No. 11701369). K. Du was partially supported by National
Science Foundation of China (Grant No. 11801084).}}

\author{Fu Zhang}

\address{College of Science, University of Shanghai for Science and Technology,
334 Jungong Road, Shanghai 200093, China.}

\ead{fuzhang82@gmail.com}

\author{Kai Du}

\address{Shanghai Center for Mathematical Sciences, Fudan University, 2005
Songhu Road, Shanghai 200438, China.}

\ead{kdu@fudan.edu.cn}
\begin{abstract}
This paper proves a Krylov\textendash Safonov estimate for a multidimensional
diffusion process whose diffusion coefficients are degenerate on the
boundary. As applications the existence and uniqueness of invariant
probability measures for the process and H\"older estimates for the
associated partial differential equation are obtained.
\end{abstract}
\begin{keyword}
Krylov\textendash Safonov estimate, degenerate diffusion, square root
process, H\"older estimate, invariant measure.
\end{keyword}

\end{frontmatter}{}

\section{Introduction}

Assume that $X=\{(X_{t},\Prob^{x}):t\ge0,\,x=(x^{1},\dots,x^{n})\in\R_{+}^{n}\coloneqq[0,\infty)^{n}\}$
is a time-homogeneous strong Markov process on a measurable space
$(\PS,\Filt)$ with a filtration $\{\mathcal{F}_{t}\}_{t\ge0}$, whose
infinitesimal generator $\mathcal{L}$ is given by
\begin{equation}
\mathcal{L}f(x)=\frac{1}{2}\sum_{i,j=1}^{n}a^{ij}(x)\sqrt{x^{i}x^{j}}\frac{\partial^{2}f}{\partial x^{i}\partial x^{j}}(x)+\sum_{i=1}^{n}b^{i}(x)\frac{\partial f}{\partial x^{i}}(x)\quad\forall f\in C_{b}^{2}(\R_{+}^{n}),\label{eq:operator}
\end{equation}
where $a^{ij}=a^{ji}:\R_{+}^{n}\to\R^{n}$ and $b^{i}:\R_{+}^{n}\to\R$
are measurable and \emph{locally bounded }functions, and $C_{b}^{2}(\R_{+}^{n})$
denotes the space of bounded and twice differentiable functions defined
on $\R_{+}^{n}$. This process relates to a stochastic differential
equation (SDE) of the following form
\begin{equation}
\md X_{t}^{i}=b^{i}(X_{t})\vd t+\sqrt{X_{t}^{i}}\,\sum\nolimits_{k}\sigma^{ik}(X_{t})\vd W_{t}^{k},\quad i=1,\dots,n,\label{eq:SDE}
\end{equation}
where $W$ is a multidimensional Brownian motion and $\sum_{k}\sigma^{ik}(x)\sigma^{jk}(x)=a^{ij}(x)$.
It is worth noting that the diffusion coefficients of $X$ are degenerate
on the boundary $\partial\R_{+}^{n}$.

This paper aims to study the regularity of a class of functions characterized
by the Markov process $X$. It is well-known that a classical harmonic
function can be characterized via multi-dimensional Brownian motion
(see \citep{karatzas1991brownian} for example). Motivated by this
fact the concept of general harmonic functions associated with Markov
processes was proposed by Dynkin \citep{dynkin1981harmonic}; those
functions and further extensions often relate to elliptic and parabolic
partial differential equations (PDEs). In a word, there is a rich
interplay between probability theory and analysis; in this context,
the probabilistic method has been used to many problems from analysis
and PDEs with fruitful outcomes. A celebrated example is the Krylov\textendash Safonov
estimate for nondegenerate diffusion processes (cf. \citep{krylov1979estimate}),
yielding a fundamental estimate for the regularity theory of fully
nonlinear elliptic and parabolic equations. Adapting Krylov\textendash Safonov's
probabilistic approach, this paper shall prove the following regularity
result for functions associated with the degenerate diffusion process
$X$ in some way. In what follows, $B_{b}(E)$ denotes the set of
bounded Borel functions defined on a set $E$.
\begin{thm}
\label{thm:main1} Let $D\subset\R_{+}^{n}$ be a simply connected
open domain containing $\partial D\cap\partial\R_{+}^{n}$, and let
$Q=[0,1)\times D$ and $\tau_{Q}=\inf\{t>0:(t,X_{t})\notin Q\}$.
Assume that
\begin{itemize}
\item[\emph{(}\textbf{\emph{C}}\emph{)}]  for each $x\in D\cap\partial\mathbb{R}_{+}^{n}$ with $x^{i}=0$,
the function $b^{i}$ has a positive lower bound in a neighborhood
of $x$; and for each $x\in D$, the matrix-valued function $a=(a^{ij})$
is uniformly positive definite in a neighborhood of $x$. 
\end{itemize}
Then, as long as $u\in B_{b}(Q)$ satisfies that
\begin{itemize}
\item[\emph{(}\textbf{\emph{U}}\emph{)}]  there is an $f\in B_{b}(Q)$ such that for each $(t,x)\in Q$, the
process
\[
u(t+s\wedge\tau_{Q_{0}},X_{s\wedge\tau_{Q_{0}}})+\int_{0}^{s\wedge\tau_{Q_{0}}}\!\!f(t+r,X_{r})\vd r\ \ \text{with}\ s\ge0
\]
is a $\Prob^{x}$-martingale with respect to $\Filt_{s}$ ,
\end{itemize}
the function $u$ is locally H\"older continuous in $Q$; more specifically,
for any compact set $S\subset Q$ there exist constants $\alpha\in(0,1)$
and $C>0$, depending only on the set $S$ and the functions $a$
and $b=(b^{i})$, such that
\begin{equation}
|u(t,x)-u(s,y)|\le C\big(\|u\|_{L^{\infty}}+\|f\|_{L^{\infty}}\big)\big(|t-s|^{\alpha/2}+\max_{i}|\sqrt{x^{i}}-\sqrt{y^{i}}|^{\alpha}\big)\label{eq:holder}
\end{equation}
for all $(t,x)$ and $(s,y)$ in $S$.
\end{thm}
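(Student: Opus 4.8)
The plan is to run Krylov and Safonov's probabilistic scheme after removing the degeneracy by the substitution $y^i=\sqrt{x^i}$. Since $x\mapsto(\sqrt{x^1},\dots,\sqrt{x^n})$ is a homeomorphism of $\R_+^n$, the process $Y_t:=(\sqrt{X_t^1},\dots,\sqrt{X_t^n})$ is again a time-homogeneous strong Markov process, and I would first check (via \eqref{eq:operator} and It\^o's formula) that on functions supported away from the faces $\{y^i=0\}$ its generator is $\widetilde{\mathcal L}g(y)=\tfrac18\sum_{i,j}a^{ij}(\check y)\,\partial_{y^iy^j}g(y)+\sum_i\widetilde b^{\,i}(y)\,\partial_{y^i}g(y)$, where $\check y:=((y^1)^2,\dots,(y^n)^2)$ and $\widetilde b^{\,i}(y):=(2y^i)^{-1}\big(b^i(\check y)-\tfrac14a^{ii}(\check y)\big)$. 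On the image $\widetilde D$ of $D$, condition (C) makes $\widetilde{\mathcal L}$ locally uniformly elliptic and its drift locally bounded away from the faces, while near a face $\{y^i=0\}$ the strict positivity of $b^i$ makes the $i$-th coordinate behave like a one-dimensional diffusion with a $1/y^i$ singular drift keeping $\{y^i=0\}$ non-absorbing. Since the right-hand side of \eqref{eq:holder} is exactly the parabolic H\"older seminorm in the variables $(t,y)$ and the martingale property (U) transfers verbatim to $Y$, it then suffices to show that $u$, viewed as a function of $(t,y)$, is locally parabolically H\"older continuous in $(0,1)\times\widetilde D$ with constants controlled on compact subsets.

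\textbf{The growth lemma.} The core would be the Krylov--Safonov hitting estimate: there should exist a nondecreasing $\psi:(0,1]\to(0,1]$, depending only on a given compact subregion of $(0,1)\times\widetilde D$ and on $a,b$, such that for every parabolic cylinder $\mathcal C=(\theta,\theta+\rho^2)\times B_\rho(y_0)$ contained in that subregion and every Borel $E\subset\mathcal C$,
\[
\Prob^{(\theta,y_0)}\!\big(\exists\,s<\sigma:(\theta+s,Y_s)\in E\big)\ \ge\ \psi\!\big(|E|/|\mathcal C|\big),\qquad \sigma:=\inf\{s>0:(\theta+s,Y_s)\notin\mathcal C\}.
\]
Following the classical route I would reduce this to two facts about $Y$: a Krylov-type occupation-time inequality bounding $\E^{(\theta,y_0)}\!\int_0^{\sigma}g(\theta+s,Y_s)\,\md s$ by a constant (depending on the subregion and $\rho$) times $\|g\|_{L^{n+1}(\mathcal C)}$ for Borel $g\ge0$ --- the probabilistic form of the Aleksandrov--Bakelman--Pucci estimate for $\partial_t+\widetilde{\mathcal L}$ --- and a lower bound, uniform over the subregion, on the probability that $Y$ started near the initial slice of $\mathcal C$ reaches a fixed concentric sub-cylinder before time $\sigma$. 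I would establish both first on cylinders at positive distance from every face, where $\widetilde b$ is bounded and the usual It\^o--Krylov arguments apply, and then extend them up to and across the faces by explicit barriers --- suitable powers of $\mathrm{dist}(\cdot,\{y^i=0\})$ and related sub-/supersolutions of $\partial_t+\widetilde{\mathcal L}$ --- that use the $1/y^i$ drift furnished by (C) to bound both the time $Y$ spends near a face and the probability that it fails to escape; the inclusion $\partial D\cap\partial\R_+^n\subset D$ and the simple connectedness of $D$ would be used to chain finitely many such cylinders between any two points of a compact subregion so that all constants are uniform there. Granting these, $\psi$ follows from the standard measure-theoretic covering argument of Krylov--Safonov. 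I expect this extension of the occupation-time and escape estimates across the degenerate faces to be the main obstacle, and the only genuinely new ingredient beyond the nondegenerate theory: there $\widetilde b^{\,i}$ is unbounded, the ABP estimate is not available off the shelf, and all of the above must be re-derived for cylinders that touch or straddle a face --- which is precisely where the strict positivity of $b^i$ in (C) is indispensable.

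\textbf{Oscillation decay and iteration.} With the hitting estimate in hand the rest is routine. Applying the martingale property (U) at the stopping time $\sigma$ gives $\big|u(\theta,y_0)-\E^{(\theta,y_0)}u(\theta+\sigma,Y_\sigma)\big|\le\|f\|_{L^\infty}\,\E^{(\theta,y_0)}\sigma\le\rho^2\|f\|_{L^\infty}$; writing $M=\sup_{\mathcal C}u$, $m=\inf_{\mathcal C}u$ and decomposing the exit distribution according to whether $Y$ visits $\{u\le(M+m)/2\}$ before leaving $\mathcal C$, the hitting estimate yields $\osc_{\mathcal C'}u\le(1-\eta)\,\osc_{\mathcal C}u+\rho^2\|f\|_{L^\infty}$ for a concentric half-size cylinder $\mathcal C'$ and a fixed $\eta\in(0,1)$. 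Iterating along a geometric sequence of nested cylinders at each point of a compact subset of $(0,1)\times\widetilde D$ then produces, in the usual way, a bound $|u(t,y)-u(s,y')|\le C(\|u\|_{L^\infty}+\|f\|_{L^\infty})(|t-s|^{\alpha/2}+|y-y'|^\alpha)$ for some $\alpha\in(0,1)$; transcribing this back through $y^i=\sqrt{x^i}$ gives \eqref{eq:holder}.
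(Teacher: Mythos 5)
Your reduction is, in substance, the paper's own starting observation: the anisotropic hypercubes $Q_{\theta}(t,x,\rho)$ of Section 2 are precisely ordinary parabolic cylinders in the variables $y^{i}=\sqrt{x^{i}}$, and your formula for the generator of $Y=\sqrt{X}$ agrees with equation (\ref{eq:Y}); the oscillation-decay and iteration steps at the end likewise match the paper's Section 2 and are fine. The genuine gap is the step you yourself call the main obstacle and then dispose of in one clause: the occupation-time (Krylov/ABP) inequality and the uniform escape estimate for cylinders that touch or straddle a face $\{y^{i}=0\}$, with constants independent of the cylinder's distance to the face and of its size. Nothing in your sketch establishes these. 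The transformed drift $\widetilde b^{\,i}(y)=(2y^{i})^{-1}\big(b^{i}(\check y)-\tfrac14 a^{ii}(\check y)\big)$ is not in $L^{n+1}$ near the face, so no off-the-shelf ABP/Krylov estimate with integrable drift applies; and your barrier mechanism rests on an unreliable sign heuristic, since Condition (C) gives $b^{i}\ge\lambda^{-1}$ but not $b^{i}>\tfrac14 a^{ii}$, so near the face the singular drift of $Y^{i}$ may point \emph{toward} $\{y^{i}=0\}$ (a Bessel-type situation of dimension $4b^{i}/a^{ii}$ possibly well below $2$, in which the face is attained). The quantitative escape-from-boundary property actually available is Lemma \ref{lem:pre-est}(b), proved by comparison in the original coordinates, not by a repelling $1/y^{i}$ drift. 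You give no construction of the barriers, no verification that they yield an $L^{n+1}$-type occupation bound rather than a mere exit-time bound, and no argument that the constants survive the scaling $(t,x)\mapsto(rt,rx)$ and the passage of cylinders across a face; the ``standard covering argument'' must also be redone so that the dyadic pieces respect the faces and allow starting points on them.

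For comparison, the paper never proves an ABP-type bound up to the boundary. In Proposition \ref{prop:large target-1}, Krylov's estimate is applied only on the portion $Q^{\delta}$ of the cylinder at distance $\delta$ from all faces, which yields the growth lemma only for densities $\mu$ close to $1$; starting points on or near the boundary are handled by the pulling-back estimates in the original coordinates (Lemma \ref{lem:pre-est}(b), Lemma \ref{lem:supp}, Proposition \ref{prop:smallcube}, Corollary \ref{cor:hit-prob}); and arbitrary $\mu\in(0,1)$ is then recovered through the anisotropic Calder\'on--Zygmund decomposition into regular cubes (Lemma \ref{lem:measure th}) combined with the infimum/contradiction argument of Section \ref{sec:small}, with the constants tracked so as to be independent of the starting point and of $\rho$. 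That machinery is exactly what your sketch replaces with ``explicit barriers''; until that step is actually carried out (or replaced by an argument of the paper's type), the proof is incomplete at its central point.
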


Condition (\textbf{U}) gives a characterization of certain functions
in terms of $X$; when $f=0$ and $u$ depends only on $x$, it is
equivalent to the definition of $X$-harmonic functions in the literature
(see \citep{dynkin1981harmonic,athreya2002degenerate} for example).
In a relevant work Athreya et al. \citep{athreya2002degenerate} proved
the pointwise continuity of bounded $X$-harmonic functions (see Theorem
6.4 there). The precise dependence of the dominating constant $C$
will be specified in the next section where the theorem is proved
with the help of an estimate of hitting times for $X$ (see Theorem
\ref{thm:main2} below).

This paper presents two direct applications of Theorem \ref{thm:main1},
which also partly motivated this work. The first one is the following
\emph{a priori} H\"older estimate for a linear PDE. Indeed, for a
function $u$ in the space $C^{1,2}(\bar{Q})$ of all functions on
$\bar{Q}$ having continuous time derivatives and second-order spatial
derivatives, one can apply It\^o's formula to $u(t,X_{t})$ to verify
Condition (\textbf{U}) with $f=\mathcal{L}u$, where the operator
$\mathcal{L}$ is given by (\ref{eq:operator}).
\begin{cor}
\label{cor:PDE}Under the assumptions of Theorem \ref{thm:main1},
if $u\in C^{1,2}(\bar{Q})$ and $f:=\mathcal{L}u\in B_{b}(Q)$, then
$u$ enjoys the estimate (\ref{eq:holder}).
\end{cor}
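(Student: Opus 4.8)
The plan is to deduce Corollary \ref{cor:PDE} from Theorem \ref{thm:main1} by checking that every $u\in C^{1,2}(\bar Q)$ with $\mathcal{L}u$ bounded is automatically an admissible function satisfying Condition (\textbf{U}). First, if $u$ is unbounded on $Q$ there is nothing to prove, since the right-hand side of \eqref{eq:holder} is then infinite; so we may assume $u\in B_b(Q)$, making it a legitimate input for the theorem.

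The substance of the argument is the following claim: for every $(t,x)\in Q$ the process
\[
M_s = u\big(t+s\wedge\tau_Q,\,X_{s\wedge\tau_Q}\big)-\int_0^{s\wedge\tau_Q}\!\big[(\partial_t+\mathcal{L})u\big](t+r,X_r)\vd r,\qquad s\ge0,
\]
is a $\Prob^x$-martingale; this is exactly Condition (\textbf{U}) with $f=-(\partial_t+\mathcal{L})u$, which reduces to $-\mathcal{L}u$ when $u$ is time-independent and is, in the situations covered by the statement, a bounded Borel function. To prove the claim I would argue as follows. The process $X$ is the diffusion attached to the SDE \eqref{eq:SDE} --- equivalently, it solves the martingale problem for $\mathcal{L}$ --- and it lives in $\R_+^n$, where the coefficients $x\mapsto a^{ij}(x)\sqrt{x^i x^j}$ and $b^i$ are locally bounded and the square-root is continuous; hence Itô's formula (or, at the level of the martingale problem, Dynkin's formula together with the space-time extension of the generator) may be applied to $u\in C^{1,2}(\bar Q)$ along the paths of $X$ up to the exit time $\tau_Q$. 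To make this rigorous in spite of $u$ being only locally of class $C^{1,2}$ and the coefficients only locally bounded, one localizes: fix an exhaustion $Q_k\uparrow Q$ by subdomains with $\bar Q_k\subset Q$ compact, replace $u$ by a compactly supported function $\tilde u_k\in C^{1,2}$ agreeing with $u$ on $\bar Q_k$ (so that all derivatives of $\tilde u_k$ and all coefficients entering $\mathcal{L}\tilde u_k$ become globally bounded, removing every integrability concern), apply the Itô/Dynkin martingale identity to $\tilde u_k(t+s,X_s)$, and stop at $\tau_{Q_k}\wedge\tau_Q=\tau_{Q_k}$, where $\tilde u_k=u$. Optional stopping gives the martingale property of $M_{s\wedge\tau_{Q_k}}$, and letting $k\to\infty$, using $\tau_{Q_k}\uparrow\tau_Q$ and dominated convergence (everything is bounded on $Q$ by $\|u\|_{L^\infty}$ and $\|(\partial_t+\mathcal{L})u\|_{L^\infty}$), upgrades this to the martingale property of $M$ itself.

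Once Condition (\textbf{U}) is in hand, Theorem \ref{thm:main1} applies directly: only $\|f\|_{L^\infty}$ enters \eqref{eq:holder}, so the sign and exact form of $f$ are irrelevant, and we obtain \eqref{eq:holder} on every compact $S\subset Q$ with $\alpha$ and $C$ depending only on $S$, $a$ and $b$. The one point requiring care is the first step --- justifying Itô's formula for a merely $C^{1,2}(\bar Q)$ function against a process whose diffusion matrix $a^{ij}\sqrt{x^i x^j}$ degenerates on $\partial\R_+^n$ and whose coefficients are only locally bounded --- and this is exactly what the localization by $\tau_{Q_k}$ together with the continuity of the square-root on $\R_+^n$ takes care of; no quantitative estimate is needed at this stage, since all the analytic work is already contained in Theorem \ref{thm:main1}.
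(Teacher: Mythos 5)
Your proposal is correct and takes essentially the same route as the paper: the paper's entire justification of Corollary~\ref{cor:PDE} is the remark preceding it, namely that It\^o's formula applied to $u(t,X_t)$ verifies Condition (\textbf{U}), after which Theorem~\ref{thm:main1} yields (\ref{eq:holder}); your localization by $\tau_{Q_k}$ just supplies the routine details of that step. The only point worth flagging is one you already half-note: the correct choice is $f=-(\partial_t+\mathcal{L})u$, so the bound really involves $\|(\partial_t+\mathcal{L})u\|_{L^\infty}$ and needs $\partial_t u$ bounded (automatic when $\bar Q$ is compact), a sloppiness present in the paper's own ``$f=\mathcal{L}u$'' and harmless here since only $\|f\|_{L^\infty}$ enters the estimate.
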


The significant of this result, like the original Krylov\textendash Safonov
estimate \citep{krylov1979estimate} (or see \citep{bass1998diffusions}
for a detailed description), is that the estimate of $u$'s H\"older
continuity norm does not depend on the smoothness of the coefficients
$a$ and $b$. This is the key point for the applications of such
estimates to fully nonlinear PDEs. Although analytic approaches to
the Krylov\textendash Safonov estimate (see \citep{Krylov1981A,Trudinger1980local})
were found soon after \citep{krylov1979estimate}, the techniques
developed from its original probabilistic proof are still powerful
to study nonlinear operators and nonlocal operators, see \citep{bass2002harnack,delarue2010krylov,chen2012boundary}
for example. Moreover, there are some relevant results in the literature
of PDEs, for instance, the Harnack inequalities and H\"older estimates
were proved in \citep{Daskalopoulos1998regularity,DaskalopoulosLee2003Holder,HongHuang2012Lp,Lieberman2016schauder}
for the equations that degenerate along one direction; those equations
stemmed from physics and geometry.

Another direct application of Theorem \ref{thm:main1} is to obtain
the existence and uniqueness of invariant probability measures for
$X$. For readers' convenience, let us recall some related notions
(cf. \citep{da1996ergodicity}). The transition semigroup $P=(P_{t})_{t\ge0}$
associated with the process $X$ is defined as
\[
P_{t}f(x)=\E^{x}f(X_{t}),\quad\forall\,f\in B_{b}(\R_{+}^{n});
\]
and a probability measure $\mu$ on $\R_{+}^{n}$ is called to be
\emph{invariant} with respect to $P$ if
\[
\mu(f)=P_{t}^{*}\mu(f)\coloneqq\int_{\R_{+}^{n}}P_{t}f(x)\,\mu(\md x),\quad\forall\,t>0,\,f\in B_{b}(\R_{+}^{n}).
\]
The invariant probability measure is an important concept in ergodic
theory of Markov processes, its existence and uniqueness can usually
be proved by means of the Krylov-Bogoliubov existence theorem and
the Doob\textendash Khas'minskii theorem (cf. \citep[Sections 4.1 and 4.2]{da1996ergodicity}),
and a key point is to show that the semigroup $P$ is \emph{strongly
Feller}, namely, $P_{t}f\in C(\R_{+}^{n})$ for some $t>0$ and $f\in B_{b}(\R_{+}^{n})$. 
\begin{thm}
\label{thm:inv-meas}Under Condition \emph{(}\textbf{\emph{C}}\emph{)}
the transition semigroup $P$ for the process $X$ is strongly Feller.
Moreover, if additionally there is a constant $\bd\ge1$ such that
for all $x\in\R_{+}^{n}$,
\begin{equation}
\bd I\geq a(x)\geq\bd^{-1}I,\quad\bd\geq b^{i}(x)\geq-\bd x^{i},\;i=1,2,\ldots,n,\label{eq:inv-meas-1}
\end{equation}
then $P$ has a unique invariant probability measure.
\end{thm}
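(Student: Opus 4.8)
The plan is to obtain both assertions by combining Theorem \ref{thm:main1} with the classical ergodic machinery for Markov semigroups recalled in the introduction. For the strong Feller property, fix $t_0>0$, $f\in B_b(\R_+^n)$ and a point $x_0\in\R_+^n$; it suffices to prove that $P_{t_0}f$ is continuous at $x_0$. Consider first the case $t_0\in(0,1]$. Choose a domain $D\subset\R_+^n$ satisfying the hypotheses of Theorem \ref{thm:main1} and having $x_0$ in its relative interior, put $Q=[0,1)\times D$, and set
\[
u(s,x):=P_{1-s}f(x)=\E^x f(X_{1-s}),\qquad (s,x)\in Q,
\]
so that $u\in B_b(Q)$ with $\|u\|_{L^\infty}\le\|f\|_{L^\infty}$ and $u(1-t_0,\cdot)=P_{t_0}f$. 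I would check that $u$ satisfies Condition (\textbf{U}) with the accompanying data function identically $0$: for the space--time process started at $(t,x)\in Q$ one has $\tau_Q=(1-t)\wedge\tau_D\le 1-t$, where $\tau_D$ is the first exit time of $X$ from $D$, so $t+s\wedge\tau_Q\le 1$; writing $N_s:=\E^x[f(X_{1-t})\mid\Filt_s]$ for the bounded martingale closed by $f(X_{1-t})$ and applying the strong Markov property at the bounded stopping time $\sigma:=s\wedge\tau_Q$ gives $N_\sigma=(P_{1-t-\sigma}f)(X_\sigma)=u(t+\sigma,X_\sigma)$ (with the natural convention $u(1,\cdot)=f$ on the time face), and the stopped process $N_{s\wedge\tau_Q}$ is again an $\Filt_s$-martingale. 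Theorem \ref{thm:main1} then applies on $Q$ with its data function equal to $0$; taking $S=\{1-t_0\}\times K$ with $K\subset D$ compact and $x_0$ in its interior, (\ref{eq:holder}) yields $|P_{t_0}f(x)-P_{t_0}f(y)|\le C\|f\|_{L^\infty}\max_i|\sqrt{x^i}-\sqrt{y^i}|^\alpha$ for $x,y$ near $x_0$, which is the desired continuity at $x_0$; since $x_0$ is arbitrary and $P_{t_0}f$ is bounded, $P_{t_0}f\in C_b(\R_+^n)$. For $t_0>1$ one writes $P_{t_0}f=P_1(P_{t_0-1}f)$ with $P_{t_0-1}f\in B_b(\R_+^n)$ and invokes the case $t_0=1$.

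For the existence of an invariant measure I would run the Krylov--Bogoliubov argument: fix $x_0$ and put $\mu_T:=\frac1T\int_0^T P_t^*\delta_{x_0}\vd t$. Since $X$ has continuous paths, $P$ is stochastically continuous, and together with the strong Feller property just established this makes $P$ Feller, so every weak limit point of $\{\mu_T\}_{T\ge1}$ is an invariant probability measure provided the family is tight. Tightness is what the additional hypothesis (\ref{eq:inv-meas-1}) must supply: from the bounds it places on $b$, together with the $\sqrt{x^i}$-structure of the diffusion part, I would seek a Lyapunov function $V\colon\R_+^n\to[0,\infty)$ with relatively compact sublevel sets satisfying $\mathcal{L}V\le C_1-C_2V$ off a compact set, so that $\sup_{T\ge1}\mu_T(V)<\infty$ and tightness follows from the Markov inequality. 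Producing this Lyapunov function is the step I expect to be the genuine obstacle.

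For uniqueness I would invoke the Doob--Khas'minskii theorem, by which a strong Feller and irreducible Markov semigroup has at most one invariant probability measure; hence it remains to prove irreducibility, i.e.\ $P_t(x,\Gamma)>0$ for every $x\in\R_+^n$, $t>0$ and nonempty relatively open $\Gamma\subset\R_+^n$. I would obtain this from a support-type argument: by (\ref{eq:inv-meas-1}) the diffusion matrix $\big(a^{ij}(x)\sqrt{x^ix^j}\big)_{i,j=1}^n$ dominates $\bd^{-1}\,\mathrm{diag}(x^1,\dots,x^n)$ and is therefore uniformly elliptic on each region $\{x\in\R_+^n:x^i\ge\delta,\ i=1,\dots,n\}$, so that in the interior $X$ behaves like a nondegenerate diffusion and can be steered into any interior ball, while Condition (\textbf{C}) makes the relevant drift components strictly positive near the faces of $\R_+^n$ and thus pushes the process inward; chaining these facts along a path joining $x$ to a point of $\Gamma$ gives $P_t(x,\Gamma)>0$. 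Making this steering argument rigorous in the degenerate regime near $\partial\R_+^n$ (where, moreover, the coefficients are merely measurable) is the remaining delicate point; by contrast the strong Feller assertion is essentially a repackaging of Theorem \ref{thm:main1}.
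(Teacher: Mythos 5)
Your strong Feller argument is complete and coincides with the paper's: set $u(s,x)=P_{1-s}f(x)$, verify Condition (\textbf{U}) with data function $0$ from the Markov property, and apply Theorem \ref{thm:main1}; the reduction of $t_0>1$ to $t_0=1$ via $P_{t_0}f=P_1(P_{t_0-1}f)$ is also fine. The problem is that the other two components are left exactly where the real content lies. For existence you reduce to tightness and then write that producing a Lyapunov function with $\mathcal{L}V\le C_1-C_2V$ ``is the step I expect to be the genuine obstacle'' --- but this is precisely what hypothesis (\ref{eq:inv-meas-1}) is there to deliver, and the paper closes it with an elementary first-moment estimate (Lemma \ref{Lem:tight}): localize with $T_k=\inf\{t>0:|X_t|=k\}$, write $\E^x[X^i_{t\wedge T_k}]=x^i+\E^x\int_0^{t\wedge T_k}b^i(X_s)\,\md s$, use the drift bound in (\ref{eq:inv-meas-1}) together with Gr\"onwall to obtain a bound on $\E^x[X^i_t]$ that is uniform in $t$, and conclude tightness of the laws $\{P_t^*\delta_x\}_{t>0}$ by Fatou and Chebyshev. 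In other words the Lyapunov function is nothing more than the coordinate functions, and no Ces\`aro-average subtleties are needed; leaving this step unexecuted is a genuine gap rather than a routine verification, all the more so because it is the only place where (\ref{eq:inv-meas-1}) enters.

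The same applies to uniqueness: you propose to prove irreducibility by a steering/chaining argument through the interior and near the faces of $\R_+^n$, and you flag its rigorous execution in the degenerate, merely-measurable regime as ``the remaining delicate point.'' But the paper never needs such a construction: Proposition \ref{prop:smallcube} (or Corollary \ref{cor:hit-prob}), which is already proved and available to you, gives for every starting point $y$ --- boundary points included --- a positive lower bound on $\Prob^y\big[X_t\in K(x,3cl/4)\big]$ at prescribed times, and irreducibility of $P$ is an immediate consequence; combined with strong Feller, the Doob--Khas'minskii theorem then yields uniqueness. So your outline identifies the correct framework (Krylov--Bogoliubov plus Doob--Khas'minskii, with strong Feller repackaging Theorem \ref{thm:main1}), which is exactly the paper's, but the two lemmas that make the framework run --- the uniform-in-time moment bound and the hitting-probability-based irreducibility --- are precisely the steps your proposal leaves open.
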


Important applications of the degenerate diffusion process $X$ can
be found in the theory of superprocesses and in financial modeling.
It has been used to characterize a class of measure-valued diffusions
called super-Markov chains, which is the limit of a large branching
particle system with finite states (see \citep{athreya2002degenerate,bass2003degenerate}
for more details about super-Markov chains). In mathematical finance,
some special forms of $X$ and other similar processes were used to
model term structures of defaultable bonds, see \citep{duffie1999modeling,dai2000specification}
for details.

It is worth noting that existence of the process $X$ is not an outcome
but the major assumption in this work. This assumption is reasonable.
Actually, the construction of such a process can be converted to solving
a martingale problem of Stroock and Varadhan associated with the operator
$\mathcal{L}$ (cf. \citep{stroock1979multidimensional}); and for
the latter problem the proof of Theorem 1.1 in \citep[Section~7]{athreya2002degenerate}
(see also \citep[Remark 1.1(a)]{bass2003degenerate}) gives a standard
argument to show existence of solutions under that the coefficients
$a^{ij}$ and $b^{i}$ are continuous and satisfy Condition (\textbf{C}),
providing us with a strong support to our assumption, though we believe
that the smoothness requirement on the coefficients might be released
more or less.

Uniqueness of solutions to the martingale problem for $\mathcal{L}$,
though unnecessary in this paper, is very important both in theory
and in practice, but having not been solved completely under the same
condition for existence. It is simply valid when the coefficients
$a^{ij}$ and $b^{i}$ are constant due to the Yamada\textendash Watanabe
uniqueness theorem (cf. \citep{yamada1971uniqueness}), but seems
to be difficult when the coefficients are variable. Remarkable works
have been done in \citep{athreya2002degenerate,bass2003degenerate}
where the uniqueness was proved if $a^{ij}$ and $b^{i}$ are continuous
and the matrix $a=(a^{ij})$ is almost diagonal; they also gave a
comprehensive explanation how to reduce the uniqueness problem to
some sharp estimates for $\mathcal{L}$ with constant coefficients
by using Stroock and Varadhan's perturbation argument. Following this
strategy our working paper \citep{zhang2019} attempts to prove a
Schauder estimate for $\mathcal{L}$, effective for the concerned
uniqueness problem, based on the estimate (\ref{eq:holder}). This
is another motivation of this work.

To capture the essential difficulties caused by degeneracy, let us
briefly review Krylov and Safonov's original work \citep{krylov1979estimate}
for nondegenerate operators. A key observation is that the generator
of a diffusion process enjoys certain smoothing property if the paths
of the process sufficiently visit the surrounding space with a non-trivial
probability (see \citep[Page 926]{delarue2010krylov} for an intuitive
explanation). To be more specific, we consider, for simplicity, a
strong Markov process $Y=(Y_{t},\mathbb{Q}^{y})$ with generator $\mathcal{A}=\sum_{i,j=1}^{n}\tilde{a}^{ij}(y)\partial_{ij}$,
where $\tilde{a}=(\tilde{a}^{ij})$ is bounded and uniformly positive
definite. Let $K_{r}(y)=\{z:|z^{i}-y^{i}|<r,\,i=1,\dots,n\}$ and
$\Gamma\subset\R^{n}$ a Borel set, and define the exit time $\tau_{r}=\inf\{t>0:Y_{t}\notin K_{r}(y)\}$
and the hitting time $\gamma_{\Gamma}=\inf\{t>0:Y_{t}\in\Gamma\}$.
If one can obtain a lower bound of the hitting probability of $\Gamma$
within $K_{r}(y)$, namely, $\mathbb{Q}^{y}[\gamma_{\Gamma}<\tau_{r}]>\eps>0$
for all $\Gamma$ with $|\Gamma\cap K_{r}(y)|>\mu|K_{r}(y)|$ and
$\mu>0$, then a $Y$-harmonic function is H\"older continuous at
the point $y$. Furthermore, if the constant $\eps$ depends only
on $\mu$ and the upper and lower bounds of $\tilde{a}$ but not on
$y$ and $r$, then the H\"older continuity is uniform: it is simply
valid in this case because by translation and rescaling it suffices
to prove the estimate only for $y=0$ and $r=1$. Readers are referred
to \citep[Section V.7]{bass1998diffusions} for detailed arguments.
We remark that the uniform estimate of hitting probability heavily
relies on the uniform boundedness and positive definiteness of $\tilde{a}$
in the nondegenerate case.

So there were two major issues to be tackled in our problem: estimating
the hitting probability when the process starts from boundary where
$\mathcal{L}$ is degenerate, and uniformity of the estimate. The
issues are intertwined in some sense. Indeed, the first one was addressed
in \citep[Theorem 6.4]{athreya2002degenerate}, without considering
uniformity, to prove the pointwise continuity of $X$-harmonic functions.
Their approach made a careful use of Krylov and Safonov's estimate,
based on an important property of $X$ that the process would be pulled
inside rapidly by the drift term (recalling that $b^{i}>0$ near $\{x^{i}=0\}$)
if it is at or runs towards the boundary, but their estimate was not
uniform because of its dependency on the starting point and the size
of the neighborhood. Such a ``pulling-back'' property also plays
a key role in our estimating of hitting probability. In order to obtain
a uniform estimate, we proceed Krylov and Safonov's original argument
with some substantial changes. In terms of rescaling we have two observations.
First, for all $r>0$, the rescaled process $(r^{-1}X_{rt})_{t\geq0}$
has the same structure required in Condition (\textbf{C}); in other
words, the estimates for both hitting probability and H\"older continuity
must be invariant under rescaling $(t,x)\mapsto(rt,rx)$. Second,
in an area keeping a positive distance from the boundary $\partial\R_{+}^{n}$,
the process $\sqrt{X}=(\sqrt{X^{1}},\dots,\sqrt{X^{n}})$ satisfies
the condition of Krylov and Safonov's original result, which implies,
if $u$ satisfies Condition (\textbf{U}), then in this area the function
$v(t,x)=u(t,x^{2})$ must be $\alpha$-H\"older in $x$ and $\frac{\alpha}{2}$-H\"older
in $t$. According to these observation, the form of estimate (\ref{eq:holder})
is appropriate for our problem; correspondingly, we introduce in our
proof a class of anisotropic hypercubes instead of the hypercubes
$K_{r}(y)$ in the nondegenerate case, which matches the above scaling
properties (see (\ref{eq:cube}) and Remark \ref{rem:secaling} below
for details). As a result, these changes make the argument more delicate
and involved than that for nondegenerate diffusion processes; for
example, we must estimate hitting probability for any starting point,
and carefully determine the dominating constants so that they do not
depend on the starting point.

This paper is organized as follows: Section 2 proves Theorem \ref{thm:main1}
based on an estimate of hitting time for the process $X$ (Theorem
\ref{thm:main2} below); Section 3 gives several auxiliary results,
including some estimates for $X$ and a measure theory lemma; Section
4 estimates the hitting time for large target sets; Section 5 completes
the proof of Theorem \ref{thm:main2}; and Section 6 proves Theorem
\ref{thm:inv-meas}.

We finish this section with some comments on the setting of this work
and notation used in what follows. Notice that the Markov process
$X=(X_{t},\mathbb{P}^{x})$ can induce a family of probability measures
on the canonical space $C([0,\infty),\R_{+}^{n})$, still denoted
by $\mathbb{P}^{x}$, under which the coordinate process is identical
to $X$ in law. Since our main result only depends on the law of $X$,
we can simply take $\Omega=C([0,\infty),\R_{+}^{n})$ and $X_{t}(\omega)=\omega(t)$,
and for $t\ge0$ and $x\in\mathbb{R}_{+}^{n}$, define the probability
measure $\Prob^{t,x}$ on $\Omega$ such that $\mathbb{P}^{t,x}[X_{t+s}\in A]=\mathbb{P}^{x}[X_{s}\in A]$
for all $s\ge0$ and Borel set $A\subset\R_{+}^{n}$; then for any
$f\in B_{b}(\R_{+}^{n})$ we have $\E^{t,x}f(X_{t+s})=\E^{x}f(X_{s})$.

\section{Proof of Theorem \ref{thm:main1}}

The proof of Theorem \ref{thm:main1} is based on a result (Theorem
\ref{thm:main2} below) concerning the probability that $X$ hits
a set of positive measure. Let us introduce some notation: for 
\begin{align*}
\theta & \in(0,1],\quad\rho>0,\quad(t,x)\in[0,\infty)\times\mathbb{R}_{+}^{n},
\end{align*}
we denote 
\[
L^{i}(x^{i},\rho):=\begin{cases}
\big[0,[\sqrt{x^{i}}+\rho]^{2}\big), & {\rm if}\,\sqrt{x^{i}}\leq\rho;\\
\big([\sqrt{x^{i}}-\rho]^{2},[\sqrt{x^{i}}+\rho]^{2}\big), & {\rm if}\,\sqrt{x^{i}}>\rho,
\end{cases}
\]
and define the anisotropic cubes 
\[
K(x,\rho):=\prod_{i=1}^{n}L^{i}(x^{i},\rho),
\]
and the anisotropic hypercubes
\begin{equation}
Q_{\theta}(t,x,\rho):=[t,t+\theta\rho^{2})\times K(x,\rho).\label{eq:cube}
\end{equation}
We call the number $\rho$ to be the \emph{size} of $K(x,\rho)$ and
$Q_{\theta}(t,x,\rho)$.
\begin{rem}
\label{rem:secaling} (1) The set $Q_{\theta}(t_{0},x,\rho)$ are
consistent under the rescaling 
\[
(t_{0}+t,x)\mapsto(t_{0}+rt,rx)
\]
 with $r>0$, for instance,
\begin{equation}
(t_{0},0)+rQ_{\theta}(0,x,\rho)=Q_{\theta}(t_{0},rx,\sqrt{r}\rho).\label{eq:rescaling}
\end{equation}

(2) Suppose $(X_{t})_{t\geq t_{0}}$ is a process satisfies SDE (\ref{eq:SDE}).
Obviously process $\big(\tilde{X}_{t}=\rho^{-2}X_{t_{0}+\rho^{2}t}\big)_{t\geq0}$.
satisfies 
\[
\vd\tilde{X}_{t}^{i}=\tilde{b^{i}}(\tilde{X}_{t})\md t+\sqrt{\tilde{X}_{t}^{i}}\langle\tilde{\sigma^{i}}(\tilde{X}_{t})\cdot\md\tilde{W}_{t}\rangle,
\]
where the rescaled process $\tilde{W}_{t}=\rho^{-1}W_{t_{0}+\rho^{2}t}$
is also a standard Brownian motion, and $(\tilde{b}(\cdot),\tilde{\sigma}(\cdot)):=(b(\rho^{2}\cdot),\sigma(\rho^{2}\cdot))$
has the same law on $Q_{\theta}(t_{0},x,\rho)$ as $(b,\sigma)$ on
$Q_{\theta}(t_{0},x,1)$. It means that $\tilde{X}$ and $X$ share
the same properties respectively on $Q_{\theta}(t_{0},x,\rho)$ and
$Q_{\theta}(t_{0},x,1)$.

(3) The length of edges of hypercubes $Q_{\theta}(t,x,\rho)$ depends
not only on the size $\rho$ but also on $x$. The length of $Q_{\theta}(t,x,\rho)$
along the $i$-th coordinate direction is increasing with respect
to $x^{i}$.
\end{rem}

We define the \emph{hitting time} for a Borel set $\Gamma$ on event
$\{X_{t}=x\}$
\[
\hit_{\Gamma}=\hit_{\Gamma}^{t,x}=\inf\{s>t:X_{s}\in\Gamma,X_{t}=x\}
\]
and the\emph{ exiting time} for a hypercube $Q$
\[
\ext_{Q}=\ext_{Q}^{t,x}=\inf\{s>t:X_{s}\notin Q,X_{t}=x\}.
\]
It is known that $\hit_{\Gamma}$ and $\ext_{Q}$ are both stopping
times (c.f. \citep[Theorem 2.4]{bass2010measurability}) under condition
$\{X_{t}=x\}$.

We may use a more precise form of Condition \textbf{(C)} as follows:
\begin{itemize}
\item[(\textbf{C'})]  Given $x_{0}\in\R_{+}^{n}$ and $\rho\in(0,1)$ there is a constant
$\bd>1$ such that
\[
\bd^{-1}I_{n}\leq a\leq\bd I_{n},\quad|b|\leq\bd\quad\text{on }\,K(x_{0},\rho)
\]
 and 
\[
b^{i}\geq\bd^{-1},\quad{\rm if}\,\sqrt{x^{i}}\in[0,\rho]\cap[(\sqrt{x_{0}^{i}}-\rho),(\sqrt{x_{0}^{i}}+\rho)].
\]
\end{itemize}
\begin{thm}
\label{thm:main2} Let Condition \emph{(}\textbf{\emph{C'}}\emph{)}
be satisfied. Then for any $\theta\in(0,1]$ and $\mu\in(0,1)$, there
exists a constant $\varepsilon=\eps(n,\bd,\theta,\mu)\in(0,1)$ such
that for any $x\in K(x_{0},\rho/6)$ and any closed set $\Gamma\subset Q:=Q_{\theta}(0,x_{0},\rho)$
satisfying $|\Gamma|\geq\mu|Q|$,
\[
\Prob^{x}[\hit_{\Gamma}\leq\ext_{Q}]\geq\varepsilon,
\]
where $x_{0}\in\mathbb{R}_{+}^{n}$ and $\rho\in(0,1]$ are arbitrarily
given.
\end{thm}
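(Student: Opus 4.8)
The plan is to reduce to the canonical scale by the rescaling observations in Remark~\ref{rem:secaling}. By applying the map $(t,x)\mapsto(\rho^{2}t,\rho^{2}x)$ we may assume $\rho=1$, so $Q=Q_{\theta}(0,x_{0},1)$ with $K(x_0,1)=\prod_i L^i(x_0^i,1)$, and the starting point $x$ lies in the shrunken cube $K(x_0,1/6)$. The heart of the matter is a lower bound on the probability that $X$, started anywhere in this central sub-cube, hits a prescribed fraction-$\mu$ portion $\Gamma$ of $Q$ before leaving $Q$, with the bound uniform over $x_0$, over the starting point, and over $\Gamma$. Following Krylov--Safonov's probabilistic scheme, I would split the analysis according to how close the relevant coordinates are to the degenerate boundary $\partial\R_+^n$.

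First I would dispose of the \emph{nondegenerate regime}. If $x_0$ keeps a positive distance from $\partial\R_+^n$ in every coordinate --- concretely, $\sqrt{x_0^i}>1$ for all $i$, so that every $L^i(x_0^i,1)$ is a genuine (two-sided) interval bounded away from $0$ --- then on $K(x_0,1)$ the change of variables $y^i=\sqrt{x^i}$ turns the operator $\mathcal L$ into a uniformly elliptic operator (with bounds depending only on $n,\bd$), and the anisotropic cube $K(x_0,1)$ becomes an ordinary cube of unit side in the $y$-variables. Hence the auxiliary results of Section~3 (the estimates for $X$ and the measure-theory lemma promised there) together with the classical Krylov--Safonov hitting-probability estimate give the desired $\eps=\eps(n,\bd,\theta,\mu)$ directly. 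The class of cubes was designed precisely so this reduction is exact.

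The substantive work is the \emph{degenerate regime}, where some coordinates of $x_0$ are near zero, i.e. $\sqrt{x_0^i}\le 1$, so $L^i(x_0^i,1)=[0,(\sqrt{x_0^i}+1)^2)$ touches the boundary. Here I would exploit the ``pulling-back'' mechanism furnished by Condition~(\textbf{C'}): along any coordinate $i$ with $\sqrt{x^i}\le 1$ one has $b^i\ge\bd^{-1}>0$, so the drift pushes $X^i$ away from $0$ at a definite rate, while the diffusion coefficient $\sqrt{X^i}$ stays controlled. Using the comparison and exit-time estimates collected in Section~3, I would show: (i) with probability bounded below (uniformly), within a time comparable to $\theta$ the process enters the region where \emph{all} coordinates satisfy $\sqrt{X^i}\ge c$ for a fixed $c=c(n,\bd)>0$, and does so without exiting $Q$; and (ii) a fixed positive fraction of $Q$ (in Lebesgue measure) lies in that interior region, so that from the interior region the nondegenerate estimate of the previous paragraph applies to the portion of $\Gamma$ sitting there. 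Because $|\Gamma\cap Q|\ge\mu|Q|$, either $\Gamma$ already has substantial mass in the interior region --- in which case we combine (i) with the strong Markov property at the entry time and the nondegenerate estimate --- or $\Gamma$ is concentrated near the boundary, in which case the drift drives the path through a boundary layer that $\Gamma$ must substantially occupy, and a direct one-dimensional comparison argument for the relevant coordinates gives the hit with positive probability. In both cases the constant depends only on $n,\bd,\theta,\mu$.

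The main obstacle, as the authors themselves flag, is \emph{uniformity}: making every constant independent of $x_0$ (hence of how many coordinates are degenerate and how deep in the boundary layer $x_0$ sits) and of the starting point $x\in K(x_0,1/6)$. The anisotropic cubes make the geometry scale-consistent, but one still has to handle the interaction between the degenerate and nondegenerate coordinates simultaneously --- e.g. controlling the joint exit time from the product cube $K(x_0,1)$ while the drift acts only on the small coordinates --- and to quantify ``a fixed fraction of $Q$ lies in the interior region'' uniformly in $x_0$, which requires a careful volume computation for $\prod_i L^i(x_0^i,1)$. I expect steps (i) and (ii) above, and their combination via the strong Markov property with a uniformly-bounded number of iterations, to be where the delicate bookkeeping of constants lives; the remaining arguments are adaptations of the classical Krylov--Safonov machinery reviewed in the introduction.
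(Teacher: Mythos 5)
Your overall plan (pull the process off the boundary using the drift, then run a nondegenerate Krylov--Safonov argument for $\sqrt{X}$ in an interior region) is a reasonable alternative to the paper's route, but as written it has concrete gaps. First, the dichotomy "nondegenerate regime: $\sqrt{x_0^i}>1$ for all $i$" does not give bounds depending only on $n,\bd$: after the change of variables $y^i=\sqrt{x^i}$ the second-order part is indeed uniformly elliptic, but the drift of $Y^i=\sqrt{X^i}$ is $(4b^i-a^{ii})/(8Y^i)$, which is unbounded whenever the inner edge $(\sqrt{x_0^i}-1)^+$ of $L^i(x_0^i,1)$ is close to $0$; so "classical KS applies directly with constants depending only on $n,\bd$" is false in that regime, and you need the same collar-removal device there as in the degenerate regime (the split must be by the value of $\sqrt{x^i}$, not by $x_0$). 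Second, and more seriously, your degenerate-regime dichotomy is not closed: your interior threshold $c=c(n,\bd)$ does not depend on $\mu$, so for small $\mu$ the set $\Gamma$ can lie entirely in the boundary layer $\{\exists i:\sqrt{x^i}<c\}$, and your treatment of that case --- "the drift drives the path through the layer and a one-dimensional comparison gives the hit" --- is not a proof: hitting a prescribed closed set of positive measure inside the degenerate layer requires controlling where (in the nondegenerate coordinates and in time) the path crosses the layer, i.e.\ exactly a hitting estimate of the type being proved, restricted to the region where the operator degenerates; a comparison for the single coordinate $X^i$ cannot deliver it. Third, "within a time comparable to $\theta$" for the pull-back is too crude: since $|\Gamma|\ge\mu|Q|$ only forces $\Gamma$ to have mass at times $\gtrsim\mu\theta$, a pull-back time of order $\theta$ can consume the entire time slab containing $\Gamma$, and your post-pull-back application of KS then sees an empty target. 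All three defects are repairable by letting the interior threshold $\delta$ and the pull-back time depend on $\mu$ (e.g.\ $n\delta^2\le\mu/2$ and pull-back time $\le\mu\theta/4$), which makes the boundary-concentration case vacuous and keeps every constant a function of $(n,\bd,\theta,\mu)$ only; but none of this bookkeeping is in your proposal, and it is precisely where the uniformity claim lives.

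For comparison, the paper does not outsource the small-$\mu$ case to the classical KS theorem at all: it first proves the estimate only for densities $\mu_0(\theta)$ close to $1$ (Proposition \ref{prop:large target-1}), using the pull-back (Proposition \ref{prop:smallcube}) plus Krylov's $L^{n+1}$ bound on the interior region $Q^{\delta}$, and then reaches arbitrary $\mu\in(0,1)$ by re-running the Krylov--Safonov growing-sets iteration intrinsically on the anisotropic hypercubes: the Calder\'on--Zygmund-type Lemma \ref{lem:measure th} together with the infimum function $\eps(\cdot)$ and a contradiction argument in Section \ref{sec:small}, followed by the shift to a regular hypercube. Your route, if the $\mu$-dependent collar and time budget are inserted and a parabolic KS hitting theorem for It\^o processes with merely measurable coefficients (started at a point a fixed distance from the lateral boundary of a box) is invoked in the precise form needed, would be shorter; the paper's argument is longer but self-contained at exactly that point and never needs a hitting estimate inside the degenerate layer.
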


Sections 3\textendash 5 are devoted to the proof of the above theorem.
With its help one can prove Theorem \ref{thm:main1}.
\begin{proof}[Proof of Theorem \ref{thm:main1}]
 If Condition (\textbf{C}) holds, then there is $\rho_{0}\in(0,1)$
such that for any $(t_{0},x_{0})\in S$, the hypercube $Q_{1}(t_{0},x_{0},\rho_{0})\subset Q$
and satisfies Condition (\textbf{C'}) for some $\bd$ (obviously,
$\bd$ may depend on $S$).

For any $Q_{1}(t_{0},x_{0},\rho_{0})$, it suffices to prove that
for any $\rho\in(0,\rho_{0}]$,
\begin{equation}
\osc_{Q_{1}(t_{0},x_{0},\rho/6)}(u)\leq\nu\osc_{Q_{1}(t_{0},x_{0},\rho)}(u)+\rho^{2}\|f\|_{\infty}\label{eq:2-1}
\end{equation}
with some constant $\nu\in(0,1)$ independent of $\rho$ and $(t_{0},x_{0})\in S$.
Indeed, according to \citep[Lemma 4.6]{lieberman1996second}, it follows
from (\ref{eq:2-1}) that
\begin{equation}
\osc_{Q_{1}(t_{0},x_{0},\rho)}(u)\leq C\rho^{\delta}\Big(\osc_{Q_{1}(t_{0},x_{0},\rho_{0})}(u)+\|f\|_{\infty}\Big)\label{eq:2-2}
\end{equation}
for some $\delta\in(0,1)$ and any $\rho\in(0,\rho_{0}/6)$, and the
estimate (\ref{eq:holder}) follows immediately.

To prove (\ref{eq:2-1}), we set 
\[
m_{-}:=\inf_{Q_{1}(t_{0},x_{0},\rho)}(u)\quad\text{and}\quad m_{+}:=\sup_{Q_{1}(t_{0},x_{0},\rho)}(u)
\]
We may assume that 
\[
|\{(t,x)\in Q_{1}(t_{0},x_{0},\rho):u(t,x)\leq(m_{-}+m_{+})/2\}|\geq(1/2)|Q_{1}(t_{0},x_{0},\rho)|
\]
otherwise we consider $-u$ instead. For $t\in[t_{0},t_{0}+\rho^{2}/36]$,
set 
\begin{align*}
Q_{0} & :=Q_{35/36}(t,x_{0},\rho),\\
\Gamma & :=\{(s,y)\in Q_{0}:u(s,y)\leq(m_{-}+m_{+})/2\}.
\end{align*}
It is easily seen that
\[
|\Gamma|\geq(17/35)|Q_{0}|.
\]
Let $\hit_{\Gamma}$ and $\ext_{Q_{0}}$ be the associated hitting
and exiting times of $X$ starting from $(t,x)\in Q_{1}(t_{0},x_{0},\rho/6)$. 

With $\ext:=\hit_{\Gamma}\wedge\ext_{Q_{0}}$ , it follows from Condition
(\textbf{U}) and the optional stopping theorem that
\begin{equation}
u(t,x)=\E^{t,x}u(\ext,X_{\ext})+\E^{t,x}\int_{t}^{\tau}\!\!f(r,X_{r})\vd r.\label{eq:key}
\end{equation}
Then applying Theorem \ref{thm:main2} with $\theta=35/36$ and $\mu=17/35$,
we have 
\begin{eqnarray*}
u(t,x) & \leq & \E^{t,x}\big[u(\ext,X_{\ext})(\mathbf{1}_{\{\hit_{\Gamma}<\ext_{Q}\}}+\mathbf{1}_{\{\hit_{\Gamma}\geq\ext_{Q}\}})\big]+\rho^{2}\|f\|_{\infty}\\
 & \leq & \varepsilon\cdot\frac{m_{-}+m_{+}}{2}+(1-\varepsilon)m_{+}+\rho^{2}\|f\|_{\infty},
\end{eqnarray*}
 thus, 
\[
u(t,x)-m_{-}\leq(1-\eps/2)(m_{+}-m_{-})+\rho^{2}\|f\|_{\infty}.
\]
Therefore, (\ref{eq:2-1}) holds with $\nu=1-\varepsilon/2$ for every
$(t,x)\in Q_{\theta}(t_{0},x_{0},\rho/6)$.
\end{proof}

\section{Auxiliary results}

In what follows we may assume that the process $X$ satisfies SDE
(\ref{eq:SDE}). Indeed, our argument only depends on the law of $X$,
so we can select other proper copies of $X$ if necessary; on the
other hand, the process $X$, of which we have assumed the existence,
can induce a solution to the martingale problem for $\mathcal{L}$,
and, owing to a celebrated result of Stroock and Varadhan (see \citep[Corollary 5.4.8]{karatzas1991brownian}
for example), a weak solution of SDE (\ref{eq:SDE}), both identical
in law to $X$. 

\subsection{Some estimates for the process $X$}

We first derive some estimates for $1$-dimensional general squared
Bessel process. 
\begin{lem}
\label{lem:pre-est}Let $\alpha$ and $\beta$ be predictable processes
with
\begin{equation}
\lambda^{-1}\le|\alpha_{t}|^{2}\leq\lambda\quad\text{and}\quad|\beta_{t}|\le\lambda,\quad t\ge0\label{eq:bsigma}
\end{equation}
for some constant $\lambda\ge1$, and let $B$ be a Brownian motion
under a probability $\Prob$, and the process $Z$ satisfy
\[
\md Z_{t}=\beta_{t}\vd t+\alpha_{t}\sqrt{Z_{t}}\vd B_{t},\quad Z_{0}=z\ge0.
\]
Let $\eps\in(0,1)$ and $c>0$ be constants. Then we have the following
assertions:

\emph{(a)} There exists a constant $\kappa=\kappa(\eps,c,\lambda)\in(0,1)$
such that
\[
\Prob\Big[\sup_{0\le t\le\kappa\rho^{2}}|\sqrt{Z_{t}}-\sqrt{z}|\ge c\rho\Big]\le\eps
\]
for all $z\ge0$ and $\rho\in(0,1]$.

\emph{(b)} Suppose $\beta_{t}\geq\lambda^{-1}$ for all $t\ge0$ additionally,
and let $S$ be a random variable uniformly distributed on $[\bar{t},2\bar{t}]$
with $\bar{t}>0$ and independent of $\alpha$, $\beta$ and $B$.
Then there exists a constant $\xi=\xi(\bar{t},\lambda,\eps)>0$ such
that 
\[
\Prob[Z_{S}\le\xi]\le\eps.
\]
\end{lem}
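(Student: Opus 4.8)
The plan is to reduce both assertions to elementary estimates for the scalar process $Z$.

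\textbf{Part (a).} First I would establish a maximal first-moment bound for $Z$ itself. Writing $Z_t-z=\int_0^t\beta_s\,\md s+M_t$ with $M_t=\int_0^t\alpha_s\sqrt{Z_s}\,\md B_s$, a standard localization together with Fatou's lemma gives $\E Z_s\le z+\lambda s$, hence $\E\langle M\rangle_T=\E\int_0^T\alpha_s^2 Z_s\,\md s\le\lambda T(z+\lambda T)<\infty$, so $M$ is a true $L^2$ martingale on $[0,T]$; then $|\beta|\le\lambda$ and Doob's inequality yield
\[
\E\Big[\sup_{0\le t\le T}|Z_t-z|\Big]\le\lambda T+2\sqrt{\lambda T(z+\lambda T)}\le 3\lambda T+2\sqrt{\lambda T z}.
\]
The next step passes from $Z$ to $\sqrt Z$ without loss: from the identity $|\sqrt{Z_t}-\sqrt z|\,(\sqrt{Z_t}+\sqrt z)=|Z_t-z|$ and $\sqrt{Z_t}+\sqrt z\ge\max(\sqrt z,|\sqrt{Z_t}-\sqrt z|)$ one sees that on $\{|\sqrt{Z_t}-\sqrt z|\ge c\rho\}$ one has $|Z_t-z|\ge c\rho\,r$ with $r:=\max(\sqrt z,c\rho)$. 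Taking $T=\kappa\rho^2$ and using $\rho\le r/c$, $\sqrt z\le r$, Markov's inequality then gives
\[
\Prob\Big[\sup_{0\le t\le\kappa\rho^2}|\sqrt{Z_t}-\sqrt z|\ge c\rho\Big]\le\frac{3\lambda\kappa\rho^2+2\rho\sqrt{\lambda\kappa z}}{c\rho\,r}\le\frac{3\lambda\kappa}{c^2}+\frac{2\sqrt{\lambda\kappa}}{c},
\]
which is $<\eps$ once $\kappa=\kappa(\eps,c,\lambda)\in(0,1)$ is chosen small enough, uniformly in $z\ge0$ and $\rho\in(0,1]$.

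\textbf{Part (b).} The extra assumption $\beta\ge\lambda^{-1}$ keeps $Z$ away from $0$, but --- since for large $\lambda$ the process genuinely reaches $0$ --- only in an averaged sense, which is exactly why $S$ is randomized. I would proceed as follows. By independence of $S$, $\Prob[Z_S\le\xi]=\bar t^{-1}\,\E\int_{\bar t}^{2\bar t}\mathbf{1}\{Z_t\le\xi\}\,\md t$. To remove the multiplicative noise, run the time change $A_t=\int_0^t\alpha_s^2\,\md s$ (strictly increasing, as $\alpha^2\ge\lambda^{-1}$): the process $\hat Z_u:=Z_{A^{-1}(u)}$ solves $\md\hat Z_u=\hat\beta_u\,\md u+\sqrt{\hat Z_u}\,\md\hat B_u$ for a Brownian motion $\hat B$ and some predictable $\hat\beta_u\in[\lambda^{-2},\lambda^2]$. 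Since $\hat\beta_u\ge\lambda^{-2}$, the one-dimensional comparison theorem (legitimate because the diffusion coefficient $\sqrt{\,\cdot\,}$ is $\tfrac12$-H\"older) dominates $\hat Z$ from below by the solution $\check Z$ of $\md\check Z_u=\lambda^{-2}\,\md u+\sqrt{\check Z_u\vee0}\,\md\hat B_u$ with $\check Z_0=0$, i.e.\ $\check Z_u=\tfrac14 R_u$ for a squared Bessel process $R$ of dimension $\delta=4\lambda^{-2}>0$. Changing variables back via $\md t=\alpha_{A^{-1}(u)}^{-2}\,\md u\le\lambda\,\md u$ and $A_{2\bar t}\le2\lambda\bar t$, and using $\hat Z\ge\check Z$,
\[
\Prob[Z_S\le\xi]\le\frac{\lambda}{\bar t}\int_0^{2\lambda\bar t}\Prob[\check Z_u\le\xi]\,\md u\le\frac{\lambda}{\bar t}\int_0^{2\lambda\bar t}\min\!\Big(1,\;C_\lambda(\xi/u)^{\delta/2}\Big)\,\md u,
\]
where the last bound uses the scaling $\check Z_u\overset{d}{=}u\,\check Z_1$ and the Gamma law of $\check Z_1$. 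By dominated convergence the right-hand side tends to $0$ as $\xi\downarrow0$, so $\xi=\xi(\bar t,\lambda,\eps)>0$ can be chosen to make it $\le\eps$.

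\textbf{Main obstacle.} I expect part (b) to be the delicate one: the pairing of a non-autonomous drift with the multiplicative noise coefficient blocks any direct comparison, and one must realize both that the clock $A_t$ normalizes the diffusion to the square-root form and that --- when $\lambda$ is large and $Z$ does hit $0$ --- only the occupation time of $\{Z\le\xi\}$, not a fixed-time probability, can be controlled; the randomization of $S$ is precisely what converts $\Prob[Z_S\le\xi]$ into such an occupation time. The remaining tools (localization for the martingale property, Doob's inequality, the one-dimensional comparison theorem, the squared Bessel scaling and explicit law) are standard, and part (a) is essentially the first-moment computation above combined with the algebraic relation between $|\sqrt{Z_t}-\sqrt z|$ and $|Z_t-z|$.
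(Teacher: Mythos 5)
Your argument is correct, but it does not follow the paper's route. For part (a) the paper fixes $\rho=1$, stops $Z$ at $\tau=\inf\{t:|\sqrt{Z_t}-\sqrt z|\ge1\}$, and splits into two cases: for $z\ge2$ it works with the SDE satisfied by $\sqrt{Z_{t\wedge\tau}}$ (whose drift is then bounded) to get a second-moment bound of order $s$, while for $z<2$ it uses $|\sqrt a-\sqrt b|^2\le|a-b|$ together with the BDG inequality; Chebyshev and the rescaling $\tilde Z_t=\rho^{-2}Z_{\rho^2t}$ then give the general statement. You instead prove a single first-moment maximal estimate for $Z$ itself (localization, Fatou, Doob) and transfer it to the square-root scale through the pointwise observation that on the bad event $|Z_t-z|\ge c\rho\max(\sqrt z,c\rho)$; this removes the case split, the stopping time and the rescaling step, and yields uniformity in $z$ and $\rho$ in one stroke --- a somewhat cleaner and more elementary variant. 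For part (b) the comparison is more drastic: the paper offers no proof at all, simply quoting Lemma 6.2 of Athreya--Barlow--Bass--Perkins, whereas you supply a self-contained argument --- rewriting $\Prob[Z_S\le\xi]$ as an occupation-time average via the independence of $S$, normalizing the diffusion by the clock $A_t=\int_0^t\alpha_s^2\,\md s$, comparing from below (Yamada--Watanabe/Ikeda--Watanabe comparison, legitimate since $\sqrt{\cdot}$ is $\tfrac12$-H\"older and the minorizing drift is constant) with one quarter of a squared Bessel process of dimension $4\lambda^{-2}$ started at $0$, and using its scaling and Gamma law. The occupation-time reduction and the randomization of $S$ are exactly the mechanism that makes the statement true even when $Z$ can reach $0$, and your write-up makes that visible; the cost is length, the benefit is that the lemma no longer rests on an external citation.
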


\begin{proof}
Assertion (b) is taken from Lemma 6.2 in \citep{athreya2002degenerate}.
To prove (a), we consider $\rho=1$ first. Define $\tau:=\inf\{t:|\sqrt{Z_{t}}-\sqrt{z}|\ge1\}$.
By Chebyshev's inequality we have
\[
\Prob\Big[\sup_{0\le t\le s}|\sqrt{Z_{t}}-\sqrt{z}|\ge c\Big]\le\frac{1}{c}\E\Big[\sup_{0\le t\le s}|\sqrt{Z_{t\wedge\tau}}-\sqrt{z}|^{2}\Big].
\]

For $z\ge2$, using the equation of $Y_{t}=\sqrt{Z_{t\wedge\tau}}$:
\[
\md Y_{t}=\bm{1}_{\{t\le\tau\}}\frac{4\beta_{t}-|\alpha_{t}|^{2}}{8Y_{t}}\vd t+\bm{1}_{\{t\le\tau\}}\frac{\alpha_{t}}{2}\vd B_{t},
\]
one can easily obtain that
\[
\E\Big[\sup_{0\le t\le s}|Y_{t}-\sqrt{z}|^{2}\Big]\le C(\lambda)s.
\]

For $z<2$, by the relation $|\sqrt{a}-\sqrt{b}|^{2}\le|a-b|$ and
the Burkholder\textendash Davis\textendash Gundy (BDG) inequality,
one has
\begin{align*}
\E\Big[\sup_{0\le t\le s}|\sqrt{Z_{t\wedge\tau}}-\sqrt{z}|^{2}\Big] & \le\E\Big[\sup_{0\le t\le s}|Z_{t\wedge\tau}-z|\Big]\\
 & \le\lambda s+\E\sup_{0\le t\le s}\int_{0}^{t}\bm{1}_{\{r\le\tau\}}\alpha_{t}\sqrt{Z_{r}}\vd B_{t}\\
 & \le\lambda s+C(\lambda)\,\E\biggl(\int_{0}^{s}Z_{r\wedge\tau}\vd t\biggr)^{\!1/2}\\
 & \le\lambda s+C(\lambda)\sqrt{s}.
\end{align*}

To sum up one obtains that
\[
\Prob\Big[\sup_{0\le t\le s}|\sqrt{Z_{t}}-\sqrt{z}|\ge c\Big]\le\frac{C(\lambda)(s+\sqrt{s})}{c},
\]
so there is a constant $s=\kappa=\kappa(\eps,c,\lambda)\in(0,1)$
such that $C(\lambda)(s+\sqrt{s})/c\le\eps$, and we conclude the
case $\rho=1$. The case of general $\rho\in(0,1]$ can be obtained
by rescaling $\tilde{Z}_{t}=\rho^{-2}Z_{\rho^{2}t}$. 
\end{proof}
Let us turn to the estimates for the strong Markov process $X$. 
\begin{lem}
\label{lem:supp} Let $\beta>1$, $0<c\leq1$, $\alpha>\epsilon>0$.
Let Condition \textbf{\emph{(}}\emph{C'}\textbf{\emph{)}} be satisfied.
Then, for any $x,z\in\mathbb{R}_{+}^{n}$ and $l\in(0,1]$ with $0<cl\le\min_{\,i}\big\{\sqrt{x^{i}},\sqrt{z^{i}}\big\}$
and $\max_{\,i}|\sqrt{x^{i}}-\sqrt{z^{i}}|\le\beta l$, there is a
constant $m_{1}=m_{1}(c,\epsilon,\alpha,\beta,\bd)>0$ such that
\begin{equation}
\Prob^{z}\Bigg[\begin{gathered}\sup_{\epsilon l^{2}\le s\le\alpha l^{2}}\max_{i}|\sqrt{X_{s}^{i}}-\sqrt{x^{i}}|\le3cl/4,\\
X_{s}\in\tilde{K}(x,z;3cl/4)\:\forall s\in[0,\alpha l^{2}]
\end{gathered}
\Bigg]\ge m_{1}(c,\epsilon,\alpha,\beta,\bd).\label{eq:supp-1}
\end{equation}
where 
\[
\tilde{K}(x^{i},z^{i};\rho):=\big\{ y\in\R_{+}^{n}:\exists\,\theta\in[0,1]\text{ s.t. }\max_{i}\big|\sqrt{y^{i}}-\theta\sqrt{z^{i}}-(1-\theta)\sqrt{x^{i}}\big|\leq\rho\big\}.
\]
\end{lem}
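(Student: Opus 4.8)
The plan is to reduce the estimate to the classical \emph{nondegenerate} theory by passing to the square-root process and steering it along a line segment. First, by the scaling of Remark~\ref{rem:secaling}(2) applied to $\tilde X_t=l^{-2}X_{l^2t}$ we may take $l=1$, so that $c\le\sqrt{x^i},\sqrt{z^i}$ and $\max_i|\sqrt{x^i}-\sqrt{z^i}|\le\beta$ for all $i$. Put $Y^i_t:=\sqrt{X^i_t}$; as in the proof of Lemma~\ref{lem:pre-est}, It\^o's formula gives, whenever $Y^i_t>0$,
\[
\md Y^i_t=\beta^i_t\vd t+\tfrac12\sum\nolimits_k\sigma^{ik}(X_t)\vd W^k_t,\qquad \beta^i_t:=\frac{4b^i(X_t)-a^{ii}(X_t)}{8\,Y^i_t}.
\]
The crucial first observation is that the target set $\mathcal T:=\tilde K(x,z;3c/4)$ keeps a positive distance from $\partial\R_+^n$: since $\sqrt{x^i},\sqrt{z^i}\ge c$, every $y\in\mathcal T$ has $\sqrt{y^i}\ge c-\tfrac34 c=\tfrac14 c$. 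Hence on $\mathcal T$ Condition~(\textbf{C'}) supplies $\bd^{-1}I_n\le a\le\bd I_n$ and $|b|\le\bd$, so there $Y$ behaves like a nondegenerate diffusion with bounded coefficients, with the uniform drift bound $|\beta^i_t|\le 5\bd/(2c)$ whenever $X_t\in\mathcal T$; it is precisely the hypothesis $cl\le\sqrt{x^i},\sqrt{z^i}$ together with the choice of radius $3cl/4<cl$ that keeps $\beta^i$ bounded, and this is where the degeneracy is defused.

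Next I would steer $Y$ along the polygonal path $\psi\colon[0,\alpha]\to\R^n$ with $\psi(t)=\sqrt z+(t/\epsilon)(\sqrt x-\sqrt z)$ for $0\le t\le\epsilon$ and $\psi(t)=\sqrt x$ for $\epsilon\le t\le\alpha$: it runs inside the segment $[\sqrt z,\sqrt x]$, satisfies $\psi(0)=\sqrt z$, and $|\dot\psi|\le\sqrt n\,\beta/\epsilon$. Writing $M_t:=Y_t-\psi(t)$, a direct check gives
\[
\Big\{\sup_{0\le t\le\alpha}\max_i|M^i_t|\le\tfrac34 c\Big\}\ \subseteq\ \Big\{X_s\in\mathcal T\ \forall\,s\in[0,\alpha]\Big\}\cap\Big\{\sup_{\epsilon\le s\le\alpha}\max_i|\sqrt{X^i_s}-\sqrt{x^i}|\le\tfrac34 c\Big\},
\]
since staying $\tfrac34c$-close to a point of $[\sqrt z,\sqrt x]$ keeps $X_s\in\tilde K(x,z;\tfrac34 c)$, and on $[\epsilon,\alpha]$ that point is $\sqrt x$; the right-hand side is exactly the event in \eqref{eq:supp-1} (with $l=1$). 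So it suffices to bound $\Prob^z[\sup_{0\le t\le\alpha}\max_i|M^i_t|\le\tfrac34 c]$ from below by a constant depending only on $c,\epsilon,\alpha,\beta,\bd$ (and the fixed dimension $n$).

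This lower bound I would obtain by a Girsanov change of drift. Set $\tau_{\mathcal T}:=\inf\{t\ge0:X_t\notin\mathcal T\}$ and $\eta_t:=2\sigma^{\top}(X_t)a(X_t)^{-1}\big(\dot\psi(t)-\beta_t\big)\mathbf 1_{\{t<\tau_{\mathcal T}\}}$; the bounds above give $|\eta_t|\le N=N(n,c,\epsilon,\beta,\bd)$, so $\mathcal E_t:=\exp\big(\int_0^t\eta_s\cdot\md W_s-\tfrac12\int_0^t|\eta_s|^2\vd s\big)$ is a true martingale, and under $\mathbb Q:=\mathcal E_\alpha\,\Prob^z$ the process $\hat W_t:=W_t-\int_0^t\eta_s\vd s$ is a Brownian motion. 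By the choice of $\eta$ one has $\md M_t=\tfrac12\sigma(X_t)\vd\hat W_t$ on $[0,\alpha\wedge\tau_{\mathcal T}]$ with $M_0=\sqrt z-\sqrt z=0$, and since $\sqrt{X^i_t}=\psi^i(t)+M^i_t$ there, $\sigma(X_t)$ is a bounded, uniformly elliptic, measurable function of $(t,M_t)$; thus, under $\mathbb Q$, the stopped process $M_{\,\cdot\,\wedge\tau_{\mathcal T}}$ is a driftless nondegenerate diffusion started at the origin, and $\tau_{\mathcal T}=\inf\{t:\max_i|M^i_t|\ge\tfrac34 c\}$. The classical small-ball/support estimates for nondegenerate diffusions --- e.g.\ those behind the Stroock--Varadhan support theorem, or the Gaussian bounds underpinning Krylov--Safonov's theorem, cf.\ \citep{bass1998diffusions} --- then give $\mathbb Q[\tau_{\mathcal T}\ge\alpha]\ge p_1$ for some $p_1=p_1(n,\bd,c,\alpha)>0$.

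Finally I would transport this back to $\Prob^z$. On $A:=\{\sup_{0\le t\le\alpha}\max_i|M^i_t|\le\tfrac34 c\}\subseteq\{\tau_{\mathcal T}\ge\alpha\}$ one has $|\eta_s|\le N$ for a.e.\ $s\le\alpha$, and $\log\tfrac{\md\mathbb Q}{\md\Prob^z}=\int_0^\alpha\eta_s\cdot\md\hat W_s+\tfrac12\int_0^\alpha|\eta_s|^2\vd s$, whence $\E^{\mathbb Q}\big[\mathbf 1_A\log\tfrac{\md\mathbb Q}{\md\Prob^z}\big]\le N\sqrt\alpha+\tfrac12 N^2\alpha=:H_0$ by Cauchy--Schwarz and It\^o's isometry. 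Jensen's inequality applied to the conditional law $\mathbb Q[\,\cdot\mid A]$ then yields $\Prob^z[A]\ge\mathbb Q[A]\exp\big(-H_0/\mathbb Q[A]\big)\ge p_1\exp(-H_0/p_1)=:m_1>0$, with $m_1$ depending only on $n,c,\epsilon,\alpha,\beta,\bd$; undoing the rescaling completes the proof. The main obstacle is the quantitative small-ball estimate under $\mathbb Q$: one must ensure $p_1$ depends \emph{only} on the ellipticity constant $\bd$, the dimension, the radius $\tfrac34 c$ and the horizon $\alpha$, and not on the (merely measurable) coefficients $a,b$ nor on the positions of $x$ and $z$. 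This is exactly where the nondegeneracy recovered on $\mathcal T$, the scale invariance built into the anisotropic cubes, and the uniform drift bound $|\beta^i_t|\le 5\bd/(2c)$ all come into play; the Jensen step is the standard device that lets us avoid a (false) pointwise lower bound on the density $\md\Prob^z/\md\mathbb Q$, and is routine once $\eta$ has been shown bounded.
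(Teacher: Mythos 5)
Your proposal is correct, and its skeleton is the same as the paper's: rescale to $l=1$ via Remark~\ref{rem:secaling}(2), pass to $Y=\sqrt{X}$, follow the polygonal path (your $\psi$ is exactly the paper's $\varphi$), and exploit that inside the tube of radius $3c/4$ each $\sqrt{X^i}$ stays above $c/4$, so the drift $\frac{4b^i-a^{ii}}{8Y^i}$ is bounded by a constant depending only on $c$ and $\bd$ --- this is precisely how the degeneracy is defused in the paper as well, and your event inclusion into the event of \eqref{eq:supp-1} matches the paper's chain of inequalities. Where you diverge is the final step: the paper simply redefines the drift of the $Y$-equation outside the tube to obtain a globally nondegenerate SDE $\widehat{Y}$ with bounded coefficients (which agrees with $Y$ up to the exit time) and then quotes \citep[Theorem I.8.5]{bass1998diffusions}, a support-type tube estimate that already accommodates bounded drift, so no change of measure is needed; you instead strip the drift by Girsanov, invoke a small-ball estimate for the resulting driftless uniformly elliptic martingale, and transfer back to $\Prob^{z}$ by the conditional Jensen (entropy) inequality. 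Your route is legitimate and quantitatively sound (the bound $|\eta_t|\le N(n,c,\epsilon,\beta,\bd)$, Novikov, and the estimate $\Prob^z[A]\ge \mathbb{Q}[A]\exp(-H_0/\mathbb{Q}[A])$ all check out), but note that the ``classical'' fact you rely on under $\mathbb{Q}$ is essentially the zero-drift special case of the very theorem the paper cites, so the Girsanov--Jensen layer buys self-containedness of the transfer at the price of extra machinery rather than avoiding the key input. One small imprecision: $\tau_{\mathcal T}$ is not equal to $\inf\{t:\max_i|M^i_t|\ge 3c/4\}$; exit from the tube around $\psi$ can occur strictly before exit from $\tilde K(x,z;3c/4)$. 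What you actually need, and what does hold, is the inequality $\inf\{t:\max_i|M^i_t|\ge 3c/4\}\le\tau_{\mathcal T}$, which guarantees the drift removal is in force up to the exit of $M$ from the ball, so the argument survives; you should state it that way. Like the paper, you also implicitly assume the bounds of Condition (\textbf{C'}) are available on the whole tube $\tilde K(x,z;3c/4)$, which is how the lemma is used later, so this is not held against you.
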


\begin{proof}
By rescaling $\tilde{X}_{t}=l^{-2}X_{l^{2}t}$ we may prove the lemma
only for $l=1$. 

For $i=1,\dots,n$, set $Y_{t}^{i}=\sqrt{X_{t}^{i}}$ starting from
$\sqrt{z^{i}}$, then on $\{X^{i}>0\}$ it satisfies
\begin{equation}
\md Y_{t}^{i}=\frac{4b_{t}^{i}-|\sigma_{t}^{i}|^{2}}{8Y_{t}^{i}}\vd t+\frac{\sigma^{i}}{2}\vd W_{t};\label{eq:Y}
\end{equation}
 and denote 
\[
\varphi^{i}(t):=\begin{cases}
\sqrt{z^{i}}+\epsilon^{-1}t(\sqrt{x^{i}}-\sqrt{z^{i}}), & t\in[0,\epsilon),\\
\sqrt{x^{i}}, & t\in[\epsilon,\alpha].
\end{cases}
\]

\noindent As we only concern the behavior of $Y^{i}$ before it exits
from $[\varphi^{i}-3c/4,\varphi^{i}+3c/4]$, one can redefine the
drift coefficient of (\ref{eq:Y}) outside this region to make it
bounded by a constant depending only on $c$ and $\bd$. Let $\widehat{Y}^{i}$
denote the solution to the modified SDE that is nondegenerate, we
derive that
\begin{align*}
 & \Prob^{z}\Big[\sup_{\epsilon\le s\le\alpha}\max_{i}|\sqrt{X_{s}^{i}}-\sqrt{x^{i}}|\le\frac{3}{4}c;\,X_{s}\in\tilde{K}(x,z;\frac{3}{4}c)\,\forall s\in[0,\alpha]\Big]\\
=\, & \Prob^{z}\Big[\sup_{\epsilon\le s\le\alpha}\max_{i}|Y_{s}^{i}-\varphi^{i}(s)|\leq\frac{3}{4}c;\,X_{s}\in\tilde{K}(x,z;\frac{3}{4}c)\,\forall s\in[0,\alpha]\Big]\\
\geq\, & \Prob^{z}\Big[\sup_{0\le s\le\alpha}\max_{i}|Y_{s}^{i}-\varphi^{i}(s)|\leq\frac{3}{4}c;\,X_{s}\in\tilde{K}(x,z;\frac{3}{4}c)\,\forall s\in[0,\alpha]\Big]\\
\geq\, & \Prob^{z}\Big[\sup_{0\le s\le\alpha}\max_{i}|Y_{s}^{i}-\varphi^{i}(s)|\leq\frac{3}{4}c\Big]\\
=\, & \Prob^{z}\Big[\sup_{0\le s\le\alpha}\max_{i}|\widehat{Y}_{s}^{i}-\varphi^{i}(s)|\le\frac{3}{4}c\Big].
\end{align*}
Applying \citep[Theorem I.8.5]{bass1998diffusions} to $\widehat{Y}$,
there exists a constant $m_{1}=m_{1}(c,\theta,\alpha,\beta,\bd)>0$
as a lower bound for the last probability. The lemma is proved.
\end{proof}
Applying the above two lemmas we can immediately obtain the following
estimate for $X$, which shows that, with a positive probability,
the components of $X$ starting near boundary leave the boundary rapidly
meanwhile the others still stay away from the boundary.
\begin{defn}
\label{def:regular} A cube $K(x,\rho)$ or a hypercube $Q_{\theta}(t,x,\rho)$
is said to be \emph{regular} if either $x^{i}=0$ or $x^{i}\ge\rho^{2}$
for all $i=1,\dots,n$.
\end{defn}

\begin{prop}
\label{prop:smallcube} For $x_{0}\in\R_{+}^{n}$, assume that Condition
\emph{(}\textbf{\emph{C'}}\emph{)} holds on the regular cube $K(x_{0},1)$.
Let $\beta>1$, $0<c\leq1$, $\alpha>\epsilon>0$ and $r\in[1/2,1)$.
Then, there exists a positive constant $M_{\ref{prop:smallcube}}=M_{\ref{prop:smallcube}}(c,\gamma,\alpha,\beta,r,\bd)$
such that for any cube $K(x,l)\subset K(x_{0},1)$ with $0<cl\le\min_{i}\sqrt{x^{i}}$
and $l<1$ we have
\begin{equation}
\Prob^{y}\big[X_{t}\in K(x,3cl/4),\,t\leq\ext_{Q_{1}(0,x_{0},1)}\big]\geq M_{\ref{prop:smallcube}}\label{eq:l8-1}
\end{equation}
for any $t\in[\epsilon l^{2},\alpha l^{2}]$ and $y\in K(x,\beta l)\cap K(x_{0},r)$.
\end{prop}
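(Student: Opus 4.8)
\textbf{Proof proposal for Proposition \ref{prop:smallcube}.}

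The plan is to deduce the statement directly from Lemma \ref{lem:supp}, the only nontrivial point being to verify the geometric inclusions that allow the tube $\tilde K(x,y;3cl/4)$ produced by that lemma to be absorbed into the cube $K(x,3cl/4)$ and, simultaneously, into the large hypercube $Q_{1}(0,x_{0},1)$ up to the exit time. First I would fix $y\in K(x,\beta l)\cap K(x_{0},r)$ and note that, by definition of the anisotropic cubes, $y\in K(x,\beta l)$ is exactly the statement $\max_i|\sqrt{x^i}-\sqrt{y^i}|\le\beta l$ (with the obvious one-sided reading when $\sqrt{x^i}\le\beta l$, but here $cl\le\min_i\sqrt{x^i}$ forbids that on the relevant components). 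Together with $0<cl\le\min_i\sqrt{x^i}$ and, after possibly shrinking, $0<cl\le\min_i\sqrt{y^i}$ — which follows because $y\in K(x_0,r)$ keeps $\sqrt{y^i}$ bounded below by a quantity controlled by $r$ and the regularity of $K(x_0,1)$ — the hypotheses of Lemma \ref{lem:supp} hold with $z=y$ and the same $c,\epsilon,\alpha,\beta$. Hence with probability at least $m_{1}(c,\epsilon,\alpha,\beta,\bd)$ we have, for the given $t\in[\epsilon l^{2},\alpha l^{2}]$, both $\max_i|\sqrt{X^i_t}-\sqrt{x^i}|\le 3cl/4$, i.e. $X_t\in K(x,3cl/4)$, and $X_s\in\tilde K(x,y;3cl/4)$ for all $s\in[0,\alpha l^2]$.

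The remaining step is to check that on this event the path does not leave $Q_{1}(0,x_{0},1)=[0,1)\times K(x_0,1)$ before time $t$, so that $t\le\ext_{Q_1(0,x_0,1)}$. In the time variable this is immediate since $t\le\alpha l^2$ and, once I require $\alpha l^2<1$ (which is legitimate: $l<1$ and I may WLOG restrict $\alpha$, or simply observe $s\le\alpha l^2$ with $l$ close to $1$ is handled by taking the constant to depend on $\alpha$ and noting the claim is vacuous otherwise), we have $s<1$ throughout $[0,t]$. In the space variable I must show $\tilde K(x,y;3cl/4)\subset K(x_0,1)$. A point $w\in\tilde K(x,y;3cl/4)$ satisfies $\sqrt{w^i}=\theta\sqrt{y^i}+(1-\theta)\sqrt{x^i}+O(3cl/4)$ for some $\theta\in[0,1]$; since both $x$ and $y$ lie in $K(x_0,r)$ with $r<1$ (for $x$ this follows from $K(x,l)\subset K(x_0,1)$ and $y\in K(x,\beta l)$ forcing $x$ into a cube of size $\le r$ once $l$ is small — more carefully, $x\in K(x_0,1)$ with the size slack $1-r$ absorbed), the convex combination of $\sqrt{x^i}$ and $\sqrt{y^i}$ lies within $r$ of $\sqrt{x_0^i}$, and then adding $3cl/4\le 3l/4$ keeps us within $r+3l/4$ of $\sqrt{x_0^i}$. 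Choosing the constraint $l$ small enough that $r+3l/4<1$ — which is automatic for $r\in[1/2,1)$ once $l<4(1-r)/3$, and otherwise the statement can be arranged by further restricting the range of $l$ and folding the dependence into $M_{\ref{prop:smallcube}}$ — gives $\tilde K(x,y;3cl/4)\subset K(x_0,1)$, hence $X_s\in K(x_0,1)$ for all $s\in[0,t]$, so $\ext_{Q_1(0,x_0,1)}\ge t$ on this event. Setting $M_{\ref{prop:smallcube}}:=m_{1}(c,\epsilon,\alpha,\beta,\bd)$ then yields \eqref{eq:l8-1}.

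I expect the main obstacle to be bookkeeping rather than anything deep: making the inclusion $\tilde K(x,y;3cl/4)\subset K(x_0,1)$ genuinely uniform. The subtlety is that $\beta l$ and $3cl/4$ are not assumed small relative to $1-r$, so for $l$ close to $1$ the naive estimate $r+3l/4<1$ can fail; the clean fix is to observe that, because everything rescales (Remark \ref{rem:secaling}), it suffices to treat $l$ bounded away from $1$, or alternatively to track that $y\in K(x_0,r)$ already pins $\sqrt{y^i}$ and $\sqrt{x^i}$ into $[\sqrt{x_0^i}-r,\sqrt{x_0^i}+r]$ and the tube only widens this by $3cl/4<3/4$, so one needs $r+3/4<1$ — false — meaning the honest argument must instead use that along the interpolation the endpoint constraint is $r$, not $r+\beta l$, and that $c\le 1$, $l$ can be taken $\le (1-r)/2$ after a covering/rescaling reduction, with the finitely-many-scales or scaling-invariance remark doing the rest. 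One should also double-check the one-sided definition of $L^i$ when some $\sqrt{x_0^i}=0$: there $x_0^i=0$ forces (regularity) nothing extra, but $cl\le\sqrt{x^i}$ and $K(x,l)\subset K(x_0,1)$ keep the relevant components positive, so the tube argument applies verbatim on those coordinates too.
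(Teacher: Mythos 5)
Your proposal has a genuine gap at its very first step: you apply Lemma \ref{lem:supp} with $z=y$, which requires $cl\le\min_i\sqrt{y^i}$, and you justify this by claiming that $y\in K(x_0,r)$ together with regularity of $K(x_0,1)$ keeps $\sqrt{y^i}$ bounded below. That is false. Regularity of $K(x_0,1)$ means each $x_0^i$ is either $0$ or $\ge 1$; when $x_0^i=0$ the factor $L^i(x_0^i,r)=[0,r^2)$ contains boundary points, so $\sqrt{y^i}$ can be $0$. Likewise $y\in K(x,\beta l)$ with $\beta>1$ allows $\sqrt{y^i}=0$ whenever $\sqrt{x^i}\le\beta l$ (e.g.\ $\sqrt{x^i}=cl$). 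So the hypotheses of Lemma \ref{lem:supp} simply fail for the starting points the proposition is designed to cover, and this is not bookkeeping: it is the whole point of the statement (the sentence preceding it in the paper says it shows components starting \emph{near the boundary} leave the boundary rapidly). Indeed your argument never uses the drift lower bound $b^i\ge\bd^{-1}$ from Condition (\textbf{C'}), but without it the conclusion is false: a component started at $0$ with zero drift stays at $0$, while $K(x,3cl/4)$ is at distance $\ge (cl/4)^2$ from $\{x^i=0\}$, so the hitting probability would be $0$.

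The paper's proof fills exactly this hole with a two-stage argument that your proposal omits. First, on a short random time interval $[0,S]$ with $S$ uniform on $[\bar t,2\bar t]$, $\bar t=\epsilon l^2/4$, it uses Lemma \ref{lem:pre-est}(b) (which is where the drift lower bound enters) to get all near-boundary components above a level $\xi$, and Lemma \ref{lem:pre-est}(a) with a carefully chosen $c_1$ (see (\ref{eq:smal-cub-3-3}) and (\ref{eq:small-cube-4-1})) to keep the path within $c_1\rho\le\frac{1-r}{2}$ of $y$, hence inside $K(x_0,1)$; this event has probability at least $1/2$. Only then, from the new position $z$ with $\min_i\sqrt{z^i}\ge\min\{\sqrt\xi,\frac{1-r}{2}\}$, is Lemma \ref{lem:supp} applied on $[S,S+(\alpha-\epsilon/4)l^2]$ and the two stages are glued by the strong Markov property; the same choice of $c_1$ is what makes the tube-containment in $K(x_0,1)$ uniform, rather than the restriction $l\le(1-r)/2$ you suggest (which cannot be obtained by rescaling, since rescaling would also change the ambient cube $K(x_0,1)$, and the proposition is needed for all $l<1$). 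To repair your proof you would need to insert this first stage, or an equivalent use of the drift condition, before invoking Lemma \ref{lem:supp}.
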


\begin{proof}
Let $\tau_{Q_{1}(0,x_{0},1)}=\tau_{Q_{1}(0,x_{0},1)}^{0,y}$ be the
exit time of the process $X$ starting from $(0,y)$. Set $\bar{t}=\frac{\epsilon l^{2}}{4}$
, and let $S$ be a random variable uniformly distributed on $[\bar{t},2\bar{t}]$
and independent of $\mathcal{F}$. We shall prove the lemma by dealing
with $X$ on two time intervals $[0,S]$ and $[S,t]$.

\textit{First, we show that before $2\bar{t}$, }$X$\textit{ leaves
the boundary at a positive probability}. For any $y\in K(x_{0},r)$,
applying assertion (b) of Lemma \ref{lem:pre-est} for $X^{i}$ with
$\xi=\xi(\bar{t},\bd,\frac{1}{4n}),$ we obtain
\begin{equation}
\sum_{\sqrt{x_{0}^{i}}=0}\Prob^{y}\big[X_{S}^{i}\leq\xi\big]\leq n\cdot\frac{1}{4n}=\frac{1}{4}.\label{eq:SmallCube-2}
\end{equation}
 Let $c_{1}$ be a positive number will be determined later. Then
using assertion (a) of Lemma \ref{lem:pre-est} for $X^{i}$ on time
interval $[0,2\bar{t}]$ with $\kappa=\kappa(\frac{1}{4n},c_{1},\bd)$
and 
\begin{equation}
\rho:=\sqrt{2\bar{t}/\kappa}=\sqrt{\frac{\epsilon}{2\kappa}}l,\label{eq:smallcube-2-1}
\end{equation}
 we have 
\begin{align}
 & \Prob^{y}\Big[\sup_{0\leq t\leq2\bar{t}=\kappa\rho^{2}}\big|\sqrt{X_{t}^{i}}-\sqrt{y^{i}}\big|\geq c_{1}\rho,\,i=1,2,\ldots n\Big]\label{eq:smallcube-3}\\
\leq\, & \sum_{i=1}^{n}\Prob^{y}\big[\sup_{0\leq t\leq2\bar{t}}\big|\sqrt{X_{t}^{i}}-\sqrt{y^{i}}\big|\geq c_{1}\rho\big]\leq\frac{1}{4}.\nonumber 
\end{align}
We require $c_{1}$ satisfying 
\begin{equation}
c_{1}\rho\leq\frac{1-r}{2},\label{eq:smal-cub-3-3}
\end{equation}
then, keeping $y\in K(x_{0},r)$ in mind, the relation $\sup_{t\in[0,2\bar{t}],i=1,\ldots,n}\big|\sqrt{X_{t}^{i}}-\sqrt{y^{i}}\big|\leq c_{1}\rho$
implies $X_{s}\in K(x_{0},1)$ for every $s\in[0,S]$ on events $\{S\leq\tau_{Q_{1}(0,x_{0},1)}\}$.
Then it follows by (\ref{eq:SmallCube-2}) and (\ref{eq:smallcube-3})
that, for any $y\in K(x_{0},r)$,
\begin{align}
 & \Prob^{y}\big[X_{S}\in K(y,c_{1}\rho),\,X_{S}^{i}>\xi\ \text{ if}\ \sqrt{x_{0}^{i}}\leq1,\,S\leq\tau_{Q_{1}(0,x_{0},1)}\big]\label{eq:smallcube-4}\\
= & \Prob^{y}\big[\widehat{X}_{S}\in K(y,c_{1}\rho),\,X_{S}^{i}>\xi\ \text{ if}\ \sqrt{x_{0}^{i}}\leq1,\,S\leq\tau_{Q_{1}(0,x_{0},1)}\big]\nonumber \\
\geq & \Prob^{y}\big[\big|\sqrt{\widehat{X}_{S}^{i}}-\sqrt{y^{i}}\big|\leq c_{1}\rho,\,X_{S}^{i}>\xi\ \text{ if}\ \sqrt{x_{0}^{i}}\leq1,\,i=1,2,\ldots,n;\,\big]\nonumber \\
\geq & 1-\Prob^{y}\big[X_{S}^{i}>\xi\ \text{ if}\ \sqrt{x_{0}^{i}}\leq1,\,i=1,2,\ldots,n\big]\nonumber \\
 & -\Prob^{y}\big[\big|\sqrt{X_{S}^{i}}-\sqrt{y^{i}}\big|\geq c_{1}\rho,\,i=1,2,\ldots,n\big]\nonumber \\
\geq & 1-\frac{1}{4}-\frac{1}{4}=\frac{1}{2}.\nonumber 
\end{align}

\textit{Second, we show that $X$ hits any small cube in a positive
probability at time $t\in[\epsilon l^{2},\alpha l^{2}]$}. For every
$s\in[\bar{t},2\bar{t}]$, $z\in K(y,c_{1}\rho)$ with $z^{i}>\xi$
if $\sqrt{x_{0}^{i}}\leq1$, by (\ref{eq:smal-cub-3-3}), if $\sqrt{x^{i}}\geq1$
\begin{align*}
|\sqrt{z^{i}}-\sqrt{x_{0}^{i}}|\leq & |\sqrt{z^{i}}-\sqrt{y^{i}}|+|\sqrt{y^{i}}-\sqrt{x_{0}^{i}}|\\
\leq & c_{1}\rho+r\leq\frac{1-r}{2}+r\\
= & \frac{1+r}{2},
\end{align*}
then $\sqrt{z^{i}}\geq\sqrt{x_{0}^{i}}-\frac{1+r}{2}\geq1-\frac{1+r}{2}=\frac{1-r}{2}$.
Besides, $\sqrt{z^{i}}\geq\sqrt{\xi}$ if $\sqrt{x_{0}^{i}}=0$. So
$\sqrt{z^{i}}>\min\{\sqrt{\xi},\frac{1-r}{2}\}$ for $i=1,\ldots,n$.

In order to ensure 
\[
c_{1}\rho\leq\min\{cl,\sqrt{\xi},\sqrt{x^{1}},\dots,\sqrt{x^{n}}\}
\]
with constraint (\ref{eq:smal-cub-3-3}), we take
\begin{equation}
c_{1}=\sqrt{\frac{2\kappa}{\epsilon}}\min\{\sqrt{\xi},\frac{1-r}{2},c\}.\label{eq:small-cube-4-1}
\end{equation}
So one has
\begin{align*}
\max_{i}\big|\sqrt{z^{i}}-\sqrt{x^{i}}\big|\leq & \max_{i}\big|\sqrt{z^{i}}-\sqrt{y^{i}}\big|+\max_{i}\big|\sqrt{y^{i}}-\sqrt{x^{i}}\big|\\
\leq & c_{1}\rho+\beta l\\
\leq & \big(\sqrt{\epsilon/(2\kappa)}+\beta\big)l.
\end{align*}
Applying Lemma \ref{lem:supp} on the period $[s,s+(\alpha-\epsilon/4)l^{2}]$
and noticing that $[\epsilon l^{2},\alpha l^{2}]\subset[s+\frac{\epsilon}{4}l^{2},s+(\alpha-\epsilon/4)l^{2}]$,
one can derive that, for any $t\in[\epsilon l^{2},\alpha l^{2}]$,
\begin{align}
 & \Prob^{s,z}\Big[X_{t}\in K(x,\frac{3}{4}c_{1}\rho),\ t\leq\ext_{Q_{1}(0,x_{0},1)}\Big]\label{eq:smallcube-5}\\
\geq & \Prob^{s,z}\Big[X_{t}\in K(x,\frac{3}{4}c_{1}\sqrt{\frac{\epsilon}{2\kappa}}l),\ t\leq\ext_{Q_{1}(0,x_{0},1)},\ \forall t\in[\epsilon l^{2},\alpha l^{2}]\Big]\nonumber \\
\geq & \Prob^{s,z}\bigg[\begin{array}{l}
X_{t}\in K\big(x,\frac{3}{4}c_{1}\sqrt{\frac{\epsilon}{2\kappa}}l\big)\ \forall t\in[s+\frac{\epsilon}{4}l^{2},s+(\alpha-\epsilon/4)l^{2}];\\
X_{t}\in K\Big(x,z;\frac{3}{4}c_{1}\sqrt{\frac{\epsilon}{2\kappa}}l\big)\Big)\ \forall t\in[s,s+(\alpha-\epsilon/4)l^{2}]
\end{array}\bigg]\nonumber \\
\geq & m_{1}(c_{1}\sqrt{\frac{\epsilon}{2\kappa}},\epsilon/4,\alpha-\varepsilon/4,\sqrt{\epsilon/(2\kappa)}+\beta)=:M_{0}.\nonumber 
\end{align}
Combining (\ref{eq:smallcube-5}) and the strong Markov property of
$X$, we obtain that for any $y\in K(x,\beta l)\cap K(x_{0},r)$,
\begin{align*}
 & \Prob^{y}[X_{t}\in K(x,3cl/4),\,t\leq\ext_{Q_{1}(0,x_{0},1)}]\\
\geq & \E^{y}\Big[\Prob^{S,X_{S}}\big[X_{t}\in K(x,\frac{3}{4}c_{1}\rho),\,t\leq\ext_{Q_{1}(0,x_{0},1)}\Big];\\
 & \qquad\qquad X_{S}\in K(y,\frac{3}{4}c_{1}\rho),\,X_{S}^{i}>\xi\ \text{if}\ \sqrt{x_{0}^{i}}\leq1,\,S\leq\ext_{Q_{1}(0,x_{0},1)}\Big]\\
\ge & M_{0}\Prob^{y}\big[\,X_{S}\in K(y,\frac{3}{4}c_{1}\rho),\,X_{S}^{i}>\xi\ \text{if}\ \sqrt{x_{0}^{i}}\leq1,\,S\leq\ext_{Q_{1}(0,x_{0},1)}\big]\\
\ge & \frac{1}{2}M_{0}=:M_{\ref{prop:smallcube}}(c,\epsilon,\alpha,\beta,r,\bd)\quad\text{using \eqref{eq:smallcube-4}}.
\end{align*}
The proof is complete.
\end{proof}
The following corollary gives a lower bound of the probability of
$X$ hitting any compact subset of a cube.
\begin{cor}
\label{cor:hit-prob} Under the assumption of Proposition \ref{prop:smallcube},
there exists positive constant $M_{\ref{cor:hit-prob}}=M_{\ref{cor:hit-prob}}(c,\epsilon,\alpha,\beta,r,\bd)$
such that for any cube $K(x,l)\subset K(x_{0},1)$ we have
\begin{equation}
\Prob^{y}\big[X_{t}\in K(x,3cl/4),\,t\leq\ext_{Q_{1}(0,x_{0},1)}\big]\geq M_{\ref{cor:hit-prob}}\label{eq:hit-prob-1}
\end{equation}
for any $l<1$, $t\in[\epsilon l^{2},\alpha l^{2}]$ and $y\in K(x,\beta l)\cap K(x_{0},r)$.
\end{cor}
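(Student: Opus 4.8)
The plan is to deduce Corollary \ref{cor:hit-prob} from Proposition \ref{prop:smallcube} by removing the extra hypothesis $0<cl\le\min_i\sqrt{x^i}$. The point is that Proposition \ref{prop:smallcube} already handles cubes that keep a definite distance from the boundary (relative to their own size), so what remains is the case where $K(x,l)$ touches or nearly touches $\partial\R_+^n$ in some coordinates; there the ``pulling-back'' effect of the drift — quantified through Lemma \ref{lem:pre-est}(b) and Lemma \ref{lem:supp} — lets the process first move into the interior of $K(x_0,1)$ and then reach a genuinely interior subcube, after which we can apply the Proposition.

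Concretely, I would first treat the case in which $K(x,l)$ is not regular in the sense that some $\sqrt{x^i}\le cl$ while $x^i>0$; but note that if $K(x,l)\subset K(x_0,1)$ and $K(x_0,1)$ is regular, the coordinates with $x_0^i=0$ force $x^i$ to lie in $[0,l^2)\subset[0,c^{-2}(cl)^2)$, so up to adjusting constants the relevant case is $x^i\in[0,(cl)^2]$ for some $i$. Pick a reference point $x'$ with $\sqrt{(x')^i}=2cl$ in those coordinates and $\sqrt{(x')^i}=\sqrt{x^i}$ otherwise; then $K(x',l)$ satisfies the hypothesis of Proposition \ref{prop:smallcube} (after replacing $c$ by $c/2$ and shrinking the other parameters), $K(x',l)\subset K(x_0,1)$, and $K(x',3c'l/4)\subset K(x,3cl/4)$ for a suitable $c'$ because $x$ and $x'$ differ by $O(cl)$ in the square-root coordinates. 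Running the process from $y\in K(x,\beta l)\cap K(x_0,r)$ and applying the Proposition with center $x'$ yields the bound, provided $y\in K(x',\beta'l)$ for an enlarged $\beta'$, which holds since $\max_i|\sqrt{y^i}-\sqrt{(x')^i}|\le\max_i|\sqrt{y^i}-\sqrt{x^i}|+2cl\le(\beta+2c)l$.

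The one subtlety — and the main obstacle — is the boundary coordinates themselves: when $x^i=0$ one cannot simply ``recenter'' because the lower edge of $L^i(x^i,l)$ is pinned at $0$, and the target subcube $K(x,3cl/4)$ in that coordinate is the interval $[0,(3cl/4)^2)$ which does include the boundary. For these coordinates I would split the time interval exactly as in the proof of Proposition \ref{prop:smallcube}: on $[0,S]$ with $S$ uniform on $[\epsilon l^2/4,\epsilon l^2/2]$, use Lemma \ref{lem:pre-est}(b) to get $X^i_S\ge\xi$ with high probability and Lemma \ref{lem:pre-est}(a) to keep the trajectory inside $K(x_0,1)$; then on $[S,t]$ the component $X^i$ is bounded away from $0$, so Lemma \ref{lem:supp} drives $X^i_t$ into $[0,(3cl/4)^2)$ (indeed into a smaller interval) while the other coordinates are handled by the recentering argument. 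In fact, since the statement of Proposition \ref{prop:smallcube} is already proved by precisely this two-stage construction, the cleanest route is to observe that its proof never really used $0<cl\le\min_i\sqrt{x^i}$ for the boundary coordinates — only for the strictly interior ones — so Corollary \ref{cor:hit-prob} follows by combining Proposition \ref{prop:smallcube} applied to the interior coordinates with the direct Lemma \ref{lem:pre-est}(b)/Lemma \ref{lem:supp} estimate for the boundary coordinates, taking $M_{\ref{cor:hit-prob}}$ to be the product of the two lower bounds together with the factor $\tfrac12$ from \eqref{eq:smallcube-4}. One then finishes with the strong Markov property at time $S$, exactly as in the last display of the previous proof.
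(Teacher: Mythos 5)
Your overall strategy is the paper's: since only a lower bound is needed, replace the target centre $x$ by a point $\hat{x}$ pushed away from $\partial\R_{+}^{n}$ in the coordinates with $\sqrt{x^{i}}\le cl$, check that a sub-cube $K(\hat{x},3\hat{c}l/4)\subset K(x,3cl/4)$ with $\min_{i}\sqrt{\hat{x}^{i}}\ge\hat{c}l$, and apply Proposition \ref{prop:smallcube} to that sub-cube (the paper shifts by $3cl/8$ and takes $\hat{c}=3c/8$). However, your execution has a genuine quantitative error: with your choice $\sqrt{(x')^{i}}=2cl$ the required inclusion $K(x',3c'l/4)\subset K(x,3cl/4)$ is impossible in a coordinate with $\sqrt{x^{i}}=0$, because there the target interval is $[0,(3cl/4)^{2})$ in the original scale, i.e.\ of square-root length $3cl/4<2cl$, so the recentred point already lies outside the target no matter how small $c'$ is. The shift must be strictly smaller than $3cl/4$; once it is (say $3cl/8$), the ``main obstacle'' you describe at the boundary coordinates simply disappears: the pinned interval $[0,(3cl/4)^{2})$ always contains an interior sub-interval of positive square-root length, and hitting that sub-interval is enough.

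Your fallback argument is also not sound as stated. The claim that the proof of Proposition \ref{prop:smallcube} ``never really used'' $0<cl\le\min_{i}\sqrt{x^{i}}$ for the boundary coordinates is incorrect: that hypothesis enters through Lemma \ref{lem:supp}, which requires $cl\le\min_{i}\{\sqrt{x^{i}},\sqrt{z^{i}}\}$ in \emph{every} coordinate so that the tube of radius $3cl/4$ around the driving path $\varphi^{i}$ stays away from $0$, allowing the drift of $\sqrt{X^{i}}$ to be modified into a bounded one and the nondegenerate support theorem to be applied. Hence using Lemma \ref{lem:supp} to ``drive $X^{i}_{t}$ into $[0,(3cl/4)^{2})$'' for a coordinate with $x^{i}=0$ already forces you to re-aim at an interior sub-interval of that target, i.e.\ exactly the recentring you set aside; re-running the two-stage construction of Proposition \ref{prop:smallcube} is then redundant. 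With the shift corrected, the corollary follows from Proposition \ref{prop:smallcube} alone (with $\hat{c}=3c/8$, and, strictly speaking, $\beta$ enlarged to account for $\max_{i}|\sqrt{y^{i}}-\sqrt{\hat{x}^{i}}|\le(\beta+3c/8)l$, a harmless adjustment), and no additional probabilistic input is needed.
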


\begin{proof}
To apply Proposition \ref{prop:smallcube}, we turn to estimate the
hitting probability of subset of $K(x,3cl/4)$ with a distance away
from $\partial\R_{+}^{n}$. Define 
\[
\sqrt{\hat{x}^{i}}:=\begin{cases}
\sqrt{x^{i}}, & {\rm if}\,\sqrt{x^{i}}>cl;\\
\sqrt{x^{i}}+3cl/8, & {\rm if}\,\sqrt{x^{i}}>cl.
\end{cases}
\]
Let $\hat{c}=3c/8$, then $K(\hat{x},3\hat{c}l/4)\subset K(x,3cl/4)$
and $\min_{i}\sqrt{\hat{x}^{i}}\geq\hat{c}l$. Then by Proposition
\ref{prop:smallcube} we have 
\begin{align*}
 & \Prob^{y}\big[X_{t}\in K(x,3cl/4),\,t\leq\ext_{Q_{1}(0,x_{0},1)}\big]\\
\geq & \Prob^{y}\big[X_{t}\in K(\hat{x},3\hat{c}l/4),\,t\leq\ext_{Q_{1}(0,x_{0},1)}\big]\\
\geq & M_{\ref{prop:smallcube}}(\hat{c},\epsilon,\alpha,\beta,r,\bd)=:M_{\ref{cor:hit-prob}}.
\end{align*}
The corollary is proved. 
\end{proof}

\subsection{A measure theory lemma}

As in Krylov and Safonov's original argument, we need a measure theory
lemma concerning a Calderón\textendash Zygmund-type decomposition
for anisotropic hypercubes defined by (\ref{eq:cube}).

In this subsection, we denote $Q:=Q_{\theta}(0,x_{0},1)$ and assume
$Q$ is regular (see Definition \ref{def:regular} above). 

The purpose of the following lemma is to decompose $Q$ into the union
of smaller sub-hypercubes according to the proportion (of the sub-hypercube)
occupied by a closed set $\Gamma\subset Q$. Given $\mu,\eta\in(0,1)$
we define two sets 
\begin{align*}
D_{1} & =\bigcup\big\{ Q\cap[(t-3\theta\rho^{2},t+4\theta\rho^{2})\times K(t,x,3\rho)]:\\
 & \qquad\qquad\tilde{Q}:=Q_{\theta}(t,x,\rho)\subset Q,\,|\Gamma\cap\tilde{Q}|\geq\mu|\tilde{Q}|,\,{\rm and}\,\tilde{Q}\,{\rm is}\,{\rm regular}\big\},\\
D_{2} & =\bigcup\big\{(t-\theta\rho^{2}-4\theta\rho^{2}/\eta,t-\theta\rho^{2})\times[K(t,x,3\rho)\cap K(0,x_{0},1)]:\\
 & \qquad\qquad\tilde{Q}:=Q_{\theta}(t,x,\rho)\subset Q,\,|\Gamma\cap\tilde{Q}|\geq\mu|\tilde{Q}|,\,{\rm and}\,\tilde{Q}\,{\rm is}\,{\rm regular}\big\}.
\end{align*}

\begin{lem}
\label{lem:measure th}\emph{(a)} $|\Gamma|\leq\mu|Q|$ implies $|\Gamma|\leq\mu|D_{1}|$.

\emph{(b)} $|D_{1}|\leq(1+\eta)|D_{2}|.$

\emph{(c)} For $0<\mu'<\mu<1$, if $|\Gamma\cap Q|\geq\mu'|Q|$, and
let $\eta=\frac{1}{\sqrt{\mu}}-1$, then one has that either 
\[
|D_{2}\cap Q|\geq\mu^{-\frac{1}{4}}\mu'|Q|,
\]
or there exits a regular hypercube $Q_{\theta}(\check{t},\check{z},\check{\rho})\subset Q$
with $\check{\rho}\geq\frac{1}{4}(1-\sqrt{\mu})\sqrt{\mu'}$ such
that 
\begin{equation}
|Q_{\theta}(\check{t},\check{z},\check{\rho})\cap\Gamma|\geq\mu|Q_{\theta}(\check{t},\check{z},\check{\rho})|.\label{eq:measure-0}
\end{equation}
\end{lem}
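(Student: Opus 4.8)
The plan is to follow the classical Calder\'on--Zygmund/Krylov--Safonov stopping-time construction adapted to the anisotropic hypercubes $Q_\theta(t,x,\rho)$, proving the three parts in order and using the measure-density properties of these cubes. For part (a), I would argue by a covering/stopping argument: since $|\Gamma| \le \mu|Q|$, almost every point of $\Gamma$ is a Lebesgue density point, so around such a point one can find arbitrarily small regular sub-hypercubes $\tilde{Q} = Q_\theta(t,x,\rho) \subset Q$ with $|\Gamma \cap \tilde{Q}| \ge \mu|\tilde{Q}|$ (one shrinks $\rho$ until the density first drops to $\mu$; regularity can be arranged because the constraint $x^i = 0$ or $x^i \ge \rho^2$ is preserved as $\rho$ decreases, and if $x^i$ sits between $0$ and $\rho_0^2$ one slides the center to the face $\{x^i=0\}$). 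Each such $\tilde{Q}$ is contained in the ``expanded'' slab $Q \cap [(t-3\theta\rho^2, t+4\theta\rho^2) \times K(t,x,3\rho)]$ that defines $D_1$, so $\Gamma$ (up to a null set) is covered by the family generating $D_1$, giving $|\Gamma| \le |D_1|$; the sharper $|\Gamma| \le \mu|D_1|$ comes from the density bound $|\Gamma\cap\tilde Q|\ge\mu|\tilde Q|$ combined with a Vitali-type selection of a disjoint subfamily whose $3$-times (in space) and suitably-times (in time) dilates cover everything. This is the step I expect to be most delicate, precisely because the cubes are anisotropic: one must check that the Vitali covering lemma still applies, i.e. that a bounded-overlap selection exists for the family $\{K(x,\rho)\}$ and that tripling the size $\rho$ really does enlarge $K(x,\rho)$ to contain all overlapping cubes of comparable or smaller size --- this uses Remark \ref{rem:secaling}(3) and the explicit form of $L^i(x^i,\rho)$, and the passage from $\sqrt{x^i}$-coordinates to $x^i$-coordinates has to be tracked carefully near the boundary.

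For part (b), the inequality $|D_1| \le (1+\eta)|D_2|$ is a purely geometric/measure-theoretic comparison between the two slabs attached to each selected $\tilde{Q} = Q_\theta(t,x,\rho)$. The set $D_1$ uses the time interval $(t-3\theta\rho^2, t+4\theta\rho^2)$, of length $7\theta\rho^2$, intersected with $K(t,x,3\rho)$; the set $D_2$ uses the earlier interval $(t - \theta\rho^2 - 4\theta\rho^2/\eta,\, t-\theta\rho^2)$, of length $4\theta\rho^2/\eta$, with the same spatial cross-section intersected with $K(0,x_0,1)$. I would show slicewise (fixing the spatial cross-section) that the $D_1$-slab sits in the forward/overlapping time range while the $D_2$-slab sits strictly in the past, and that along each fiber the ratio of the relevant one-dimensional (time) measures is at most $(1+\eta)$: indeed $7\theta\rho^2 \le (1+\eta)\cdot 4\theta\rho^2/\eta$ amounts to $7\eta \le 4(1+\eta)$, i.e. $3\eta \le 4$, which holds for all $\eta \in (0,1)$ (and in fact with room to spare); the spatial restriction to $K(0,x_0,1)$ in $D_2$ is harmless because we only ever integrate over points of $Q$, whose spatial projection lies in $K(x_0,1)$. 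Summing these fiberwise estimates over the (essentially disjoint) selected family gives the claim; one should be slightly careful about overlaps of the enlarged slabs, but the standard trick is to note that the same disjoint subfamily from part (a) controls both $D_1$ and $D_2$ up to bounded overlap.

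For part (c), I would combine (a) and (b) with an iteration/dichotomy. Start from $|\Gamma \cap Q| \ge \mu'|Q|$ with $\mu' < \mu$. If already some regular $\tilde{Q} = Q_\theta(t,x,\rho) \subset Q$ with $\rho$ not too small (say $\rho \ge \frac14(1-\sqrt\mu)\sqrt{\mu'}$) satisfies $|\Gamma\cap\tilde Q| \ge \mu|\tilde Q|$, we are in the second alternative and done. Otherwise every such not-too-small regular sub-hypercube has density below $\mu$, so the $D_1, D_2$ construction with $\eta = 1/\sqrt\mu - 1$ (hence $1+\eta = 1/\sqrt\mu$) applies at this scale and, chaining $|\Gamma\cap Q| \ge \mu'|Q|$ through the density-increment mechanism of (a)--(b) --- which at each step multiplies the measure lower bound by roughly $\mu^{-1/2}$ while the enlarged sets $D_1 \subset D_2$ propagate the mass backward and outward --- one obtains $|D_2 \cap Q| \ge \mu^{-1/4}\mu'|Q|$ after controlling the number of admissible iterations by the lower size bound $\check\rho \ge \frac14(1-\sqrt\mu)\sqrt{\mu'}$ (this bound is what prevents the cubes from shrinking to scale zero, since $D_2$ reaches back in time by an amount $\sim \rho^2/\eta$ and must still fit inside $Q$). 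The bookkeeping here --- relating the exponents $\mu^{-1/4}$ and $\mu^{-1/2}$, and extracting the precise constant $\frac14(1-\sqrt\mu)\sqrt{\mu'}$ --- is routine once the geometry in (a) and (b) is in place, but it is the part that needs the most careful constant-chasing; the genuine obstacle remains part (a), namely establishing the anisotropic Vitali covering lemma that makes the whole decomposition work.
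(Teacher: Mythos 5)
Your plan for part (a) has a structural flaw beyond the unproved anisotropic covering lemma that you yourself flag. A Vitali (or Besicovitch) selection gives you \emph{disjoint} cubes whose \emph{dilates} cover $\Gamma$, so any estimate along that route bounds $|\Gamma|$ through the measures of the dilates and inevitably produces $|\Gamma|\le C\,\mu\,|D_{1}|$ with a dimensional dilation/overlap constant $C>1$, not the constant-free bound $|\Gamma|\le\mu|D_{1}|$ that the lemma asserts and that part (c) actually uses (there the factor $\frac{1}{(1+\eta)\mu}$ with $\eta=\mu^{-1/2}-1$ must combine exactly to give $\mu^{-1/4}\mu'$). The paper does not use a covering lemma at all: it partitions $Q$ hierarchically into regular sub-hypercubes (nine equal time slices and a triadic splitting in each $\sqrt{x^{i}}$ variable, chosen so regularity is preserved) and runs the Calder\'on--Zygmund stopping-time argument over this \emph{nested grid}. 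The stopped cubes are then pairwise disjoint by construction, each has $\Gamma$-density below $\mu$, each lies in $D_{1}$ because one of its children belongs to the family defining $D_{1}$, and they cover $\Gamma$ up to a null set by the Lebesgue differentiation theorem; disjointness and covering hold \emph{simultaneously}, which is exactly what a Vitali-type selection cannot give you and what the sharp factor $\mu$ requires.

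Parts (b) and (c) also have gaps. For (b), your per-cube, fiberwise comparison $7\theta\rho^{2}\le(1+\eta)\cdot4\theta\rho^{2}/\eta$ compares one $D_{1}$-slab with its own $D_{2}$-slab, but $D_{1}$ and $D_{2}$ are unions over \emph{all} good regular sub-hypercubes, and the $D_{2}$-slabs of different cubes (different sizes $\rho$, different times) may overlap far more than the corresponding $D_{1}$-slabs do; passing from the per-cube inequality to $|D_{1}|\le(1+\eta)|D_{2}|$ is precisely the content of the one-dimensional covering argument of Krylov's Lemma 2.3 that the paper cites, and your appeal to ``the disjoint subfamily from part (a)'' does not apply, since that subfamily is not the family generating $D_{1},D_{2}$ and a bounded-overlap bound would in any case destroy the sharp constant $1+\eta$. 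For (c), no iteration or density-increment scheme is needed or used: the paper argues by the single dichotomy $|D_{2}\setminus Q|\le$ or $>\ \mu^{-1/4}(\mu^{-1/4}-1)\mu'|Q|$; in the first case (a) and (b) give directly $|D_{2}\cap Q|\ge\frac{\mu'}{\sqrt{\mu}}|Q|-\mu^{-1/4}(\mu^{-1/4}-1)\mu'|Q|=\mu^{-1/4}\mu'|Q|$, while in the second case each $D_{2}$-slab can exit $Q$ only through the past time face by an amount of order $\theta\check\rho^{2}/\eta$, so a large protrusion forces some contributing regular cube to satisfy $4\check\rho^{2}/\eta\ge\mu^{-1/4}(\mu^{-1/4}-1)\mu'$ and hence $\check\rho\ge\frac{1}{4}(1-\sqrt{\mu})\sqrt{\mu'}$. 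Your sketch neither reproduces this mechanism nor verifies the stated constants, so as written the proposal does not prove the lemma.
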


\begin{proof}
(a) We divide $Q$ in to a union of smaller hypercubes with disjoint
interiors:
\begin{itemize}
\item along $t$-axis: partition $Q$ to nine equal parts by hyperplanes
$t=\theta i/3^{2},\:i=1,2,\ldots,8$;
\item along $x$- axises: for $i=1,2,\ldots,n$,
\begin{itemize}
\item if $\sqrt{x_{0}^{i}}\geq1$, we partition $Q$ by hyperplanes $\sqrt{x^{i}}=\sqrt{x_{0}^{i}}-\frac{1}{3}$
and $\sqrt{x^{i}}=\sqrt{x_{0}^{i}}+\frac{1}{3}$,
\item if $\sqrt{x_{0}^{i}}=0$, we partition $Q$ by hyperplane $\sqrt{x^{i}}=\frac{1}{3}$.
\end{itemize}
\end{itemize}
Obviously, every sub-hypercube is regular and of form $Q_{\theta}(t,x,1/3)$
with some $(t,x)\in Q$. We denote these sub-hypercube by $Q_{j_{1}}$.

We construct $n$-level sub-hypercubes by induction. Suppose $(n-1)$-level
regular sub-hypercubes are defined. Then we partition an $(n-1)$-level
sub-hypercube $Q_{j_{1}j_{2}\ldots j_{n-1}}=Q_{\theta}(\hat{t},\hat{x},\frac{1}{3^{n-1}})$
into smaller hypercubes in a similar way for $Q$: 
\begin{itemize}
\item along $t$-axis: partition $Q_{j_{1}j_{2}\ldots j_{n-1}}$ to nine
equal parts by hyperplanes $t=\hat{t}+\theta i/3^{n+1},\:i=1,2,\ldots,8$;
\item along $x$-axises: for $i=1,2,\ldots,n$,
\begin{itemize}
\item if $\sqrt{\hat{x}^{i}}\geq1$, we partition $Q_{j_{1}j_{2}\ldots j_{n-1}}$
by hyperplanes $\sqrt{x^{i}}=\sqrt{\hat{x}^{i}}-\frac{1}{3^{n}}$
and $\sqrt{x^{i}}=\sqrt{\hat{x}^{i}}+\frac{1}{3^{n}}$,
\item if $\sqrt{\hat{x}^{i}}=0$, we partition $Q_{j_{1}j_{2}\ldots j_{n-1}}$
by hyperplanes $\sqrt{x^{i}}=\frac{1}{3^{n}}$.
\end{itemize}
\end{itemize}
Every sub-hypercube obtained in this step, labeled with $Q_{j_{1}j_{2}\ldots j_{n-1}j_{n}}$,
is also regular and of form $Q_{\theta}(t,x,\frac{1}{3^{n}})$ with
some $(t,x)\in Q_{j_{1}j_{2}\ldots j_{n-1}}$. We remark that the
number of $j_{n}$'s values may differ from different $Q_{j_{1}j_{2}\ldots j_{n-1}j_{n}}$.

We denote by $\mathscr{S}$ a family of all sub-hypercubes satisfying
the following conditions: i) the sub-hypercube, say $Q_{j_{1}j_{2}\ldots j_{n-1}}$
with some $n$, satisfies 
\begin{equation}
|Q_{j_{1}j_{2}\ldots j_{n-1}}\cap\Gamma|<\mu|Q_{j_{1}j_{2}\ldots j_{n-1}}|,\label{eq:measure-1}
\end{equation}
and ii) there is at least one $Q_{j_{1}j_{2}\ldots j_{n-1}j_{n}}$
obtained from $Q_{j_{1}j_{2}\ldots j_{n-1}}$ such that
\[
|Q_{j_{1}j_{2}\ldots j_{n-1}j_{n}}\cap\Gamma|\geq\mu|Q_{j_{1}j_{2}\ldots j_{n-1}j_{n}}|.
\]
From the definition of $D_{1}$ it is easily known that
\[
\tilde{\Gamma}:=\cup_{\tilde{Q}\in\mathscr{S}}\tilde{Q}\subset D_{1},
\]
and by the relation (\ref{eq:measure-1}),
\[
|\Gamma\cap\tilde{\Gamma}|=\sum_{\tilde{Q}\in\mathscr{S}}|\Gamma\cap\tilde{Q}|<\mu\sum_{\tilde{Q}\in\mathscr{S}}|\tilde{Q}|=\mu|\tilde{\Gamma}|\leq\mu|D_{1}|.
\]
If one can show that $|\Gamma\backslash\tilde{\Gamma}|=0$, then Assertion
(a) is valid because
\[
|\Gamma|\leq|\Gamma\cap\tilde{\Gamma}|+|\Gamma\backslash\tilde{\Gamma}|\leq\mu|D_{1}|.
\]

Now we prove $|\Gamma\backslash\tilde{\Gamma}|=0$ by Lebesgue's theorem
(seeing \citep[Theorem 7.10]{rudin1987Real}). Notice that every point
in $\Gamma\backslash\tilde{\Gamma}$ is the limit of a sequence of
sub-hypercubes $\tilde{Q}^{k}$ with radius $3^{-k}$ and $|\Gamma\cap\tilde{Q}^{k}|<\mu|\tilde{Q}^{k}|$,
$k=1,2,\ldots$. Applying Lebesgue's theorem to the function $\mathbf{1}_{\Gamma}(\cdot)$,
one knows
\[
\mathbf{1}_{\Gamma}\leq\mu\quad\text{a.e. on }\:\Gamma\backslash\tilde{\Gamma}.
\]
 This along with $\mu<1$ yields $|\Gamma\backslash\tilde{\Gamma}|=0$.
Hence, Assertion (a) is proved. \medskip{}

The proof of Assertion (b) is quite similar to that of Lemma 2.3 in
\citep{Krylov1981A}, so we omit it here. Next we give a proof of
Assertion (c); a similar result can be found in the textbook \citep[Lemma 2.4 , Ch 7]{chen2003second}
in Chinese.

We may assume $|\Gamma|\leq\mu|Q|$ without loss of generality, otherwise
the relation (\ref{eq:measure-0}) already holds for $Q$ itself.
We discuss the following two cases:

\textit{(1) }$|D_{2}\backslash Q|\leq\mu^{-\frac{1}{4}}\big(\mu^{-\frac{1}{4}}-1\big)\mu'|Q|$.

Using assertion (b), we have
\begin{align*}
|D_{2}\cap Q| & =|D_{2}|-|D_{2}\backslash Q|\\
 & \geq\frac{1}{1+\eta}|D^{1}|-\mu^{-\frac{1}{4}}\big(\mu^{-\frac{1}{4}}-1\big)\mu'|Q|.
\end{align*}
 It follows from assertion (a) that 
\begin{align*}
|D_{2}\cap Q| & \geq\frac{1}{(1+\eta)\mu}|\Gamma|-\mu^{-\frac{1}{4}}\big(\mu^{-\frac{1}{4}}-1\big)\mu'|Q|\\
 & \geq\frac{\mu'}{\sqrt{\mu}}|Q|-\mu^{-\frac{1}{4}}\big(\mu^{-\frac{1}{4}}-1\big)\mu'|Q|\\
 & =\mu^{-\frac{1}{4}}\mu'|Q|.
\end{align*}

\textit{(2) }$|D_{2}\backslash Q|>\mu^{-\frac{1}{4}}\big(\mu^{-\frac{1}{4}}-1\big)\mu'|Q|$.

By the definition of $D_{2}$, there exists $Q_{\theta}(\check{t},\check{z},\check{\rho})\subset Q$
satisfying $|Q_{\theta}(\check{t},\check{z},\check{\rho})\cap\Gamma|\geq\mu|Q_{\theta}(\check{t},\check{z},\check{\rho})|$
and $4\check{\rho}^{2}/\eta\geq\mu^{-\frac{1}{4}}\big(\mu^{-\frac{1}{4}}-1\big)\mu'$,
which implies $\check{\rho}\geq\frac{1}{4}(1-\sqrt{\mu})\sqrt{\mu'}$.
\end{proof}

\section{Hitting probability of large sets}

We now prove Theorem \ref{thm:main2} when $|\Gamma\cap Q|/|Q|$ is
large enough. 
\begin{prop}
\label{prop:large target-1} Let Condition \emph{(}\textbf{\emph{C'}}\emph{)}
hold on $K(x_{0},\rho)$ with $x_{0}\in\R_{+}^{n}$ and $\rho<1$.
For $\theta\in(0,1)$, there exist $\mu_{0}=\mu_{0}(\theta)\in(0,1)$
and $\varepsilon=\varepsilon(\mu_{0})>0$ such that for any $x\in K(x_{0},3\rho/4)$
and any closed set $\Gamma\subset Q=Q_{\theta}(t_{0},x_{0},\rho)$
satisfying $|\Gamma|\geq\mu_{0}|Q|$ we have that
\begin{equation}
\Prob^{t_{0},x}[\hit_{\Gamma}\leq\ext_{Q}]\geq\varepsilon(\mu_{0}),\label{eq:3-0-0}
\end{equation}
where $(t_{0},x_{0})\in[0,\infty)\times\mathbb{R}_{+}^{n}$ and $\rho\in(0,1]$
are arbitrarily given.
\end{prop}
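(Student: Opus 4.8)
The plan is to reduce the statement, by the rescaling described in Remark \ref{rem:secaling} and a time-shift to $t_{0}=0$, to the case $\rho=1$ and $t_{0}=0$, where $Q=Q_{\theta}(0,x_{0},1)$ is a regular hypercube (if $x_{0}$ has components with $0<\sqrt{x_{0}^{i}}<1$ one first enlarges/relocates to sit inside a regular cube on which (\textbf{C'}) still holds, absorbing the constants into $\bd$). Once there, the idea is the standard Krylov--Safonov mechanism: cover a definite fraction of $\Gamma$ by small anisotropic cubes, and use Corollary \ref{cor:hit-prob} to show that starting from anywhere in $K(x_{0},3/4)$ the process has a uniformly positive chance of landing in one such small cube before exiting $Q$. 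The key quantitative input is that Corollary \ref{cor:hit-prob}, applied with fixed parameters (say $c=1$, $\beta$ a fixed geometric constant coming from how the covering cubes overlap, $r=3/4$, and $\epsilon,\alpha$ chosen so that $[\epsilon l^{2},\alpha l^{2}]$ is long enough to reach the relevant time slice of $Q$), gives a lower bound $M_{\ref{cor:hit-prob}}$ depending only on $(n,\bd,\theta)$ and not on the particular small cube.

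The main steps, in order, are as follows. First I would fix $\mu_{0}$ close to $1$ — to be pinned down at the end — so that $|\Gamma|\ge\mu_{0}|Q|$ forces $\Gamma$ to occupy a substantial portion of \emph{every} time-slab $\{t\}\times K(x_{0},1)$ for $t$ in a fixed subinterval $J\subset[0,\theta)$ of definite length; concretely, by Fubini the set of bad times $t$ for which $|\Gamma\cap(\{t\}\times K(x_{0},1))|<\tfrac12|K(x_{0},1)|$ has measure at most $2(1-\mu_{0})\theta$, so choosing $\mu_{0}$ near $1$ leaves a good time-slab $J$ of length $\ge c_{0}\theta$. Second, fixing a good time $t_{1}\in J$, I would decompose $K(x_{0},1)$ into anisotropic cubes $K(x,l)$ of a fixed small size $l=l(n,\bd,\theta)$ (regular or at least with $\min_{i}\sqrt{x^{i}}\ge cl$, as required by Proposition \ref{prop:smallcube}) via the dyadic-type partition from the proof of Lemma \ref{lem:measure th}, and select those small cubes $K(x,l)$ whose $\tfrac34cl$-shrinking meets $\Gamma$ at the slice $t_{1}$ in substantial proportion; since $\Gamma$ fills at least half of that slice, a fixed fraction $\kappa_{1}=\kappa_{1}(n)$ of the covering cubes are "good." Third, I would verify the geometric fact that starting from any $y\in K(x_{0},3/4)$, that $y$ lies in $K(x,\beta l)$ for at least one good small cube $K(x,l)$ — this is where one fixes $\beta$ as a function of $l$ and of the diameter of $K(x_{0},1)$ in the $\sqrt{\cdot}$-coordinates — so Corollary \ref{cor:hit-prob} applies with $t=t_{1}$ and yields
\[
\Prob^{0,y}\big[X_{t_{1}}\in K(x,3cl/4),\ t_{1}\le\ext_{Q}\big]\ \ge\ M_{\ref{cor:hit-prob}}(n,\bd,\theta).
\]
On the event in that display, $(t_{1},X_{t_{1}})\in\Gamma$ and $t_{1}<\ext_{Q}$ (since $t_{1}\in[0,\theta)$ and $X_{t_{1}}\in K(x_{0},1)$), hence $\hit_{\Gamma}\le\ext_{Q}$; taking $\varepsilon(\mu_{0}):=M_{\ref{cor:hit-prob}}(n,\bd,\theta)$ (possibly after a further infimum over the finitely many choices forced by the partition) gives (\ref{eq:3-0-0}). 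Finally, I would record the exact threshold $\mu_{0}$: it must be large enough that the good time-slab argument goes through and that enough small cubes are good to cover $K(x_{0},3/4)$ with overlaps bounded by the chosen $\beta$.

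The step I expect to be the main obstacle is the \emph{regularity} requirement: Proposition \ref{prop:smallcube} and Corollary \ref{cor:hit-prob} are stated for cubes $K(x,l)\subset K(x_{0},1)$, but the hitting estimate of Proposition \ref{prop:smallcube} genuinely needs $0<cl\le\min_{i}\sqrt{x^{i}}$, and Corollary \ref{cor:hit-prob} removes that only at the cost of shrinking toward $\hat x$ with components bounded away from $\partial\R_{+}^{n}$. So when a good covering cube $K(x,l)$ happens to abut the boundary (some $\sqrt{x^{i}}$ small), one cannot directly ask the process to hit $\Gamma$ inside it; instead one must intersect $\Gamma$ with the interior sub-cube $K(\hat x,\tfrac34\hat c l)$ produced in Corollary \ref{cor:hit-prob}, and this is only useful if $\Gamma$ still occupies a definite fraction of \emph{that} smaller, boundary-avoiding region. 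Ensuring this forces $\mu_{0}$ even closer to $1$ and requires a Fubini/averaging argument not just in $t$ but controlling how much of the slice mass $\Gamma$ can hide in the thin boundary layer $\{\min_i\sqrt{x^i}\le cl\}$; keeping all constants independent of $x_{0}$ (in particular uniform as $x_{0}$ ranges over the boundary versus the interior) is the delicate bookkeeping here, and is exactly the "uniformity" difficulty flagged in the introduction.
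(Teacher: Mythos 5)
There is a genuine gap at the heart of your argument, in the step where you conclude that on the event $\{X_{t_{1}}\in K(x,3cl/4),\ t_{1}\le\ext_{Q}\}$ one has $(t_{1},X_{t_{1}})\in\Gamma$. Your ``good'' small cubes are only cubes in which $\Gamma$ occupies a \emph{substantial proportion} of the time-$t_{1}$ slice; landing anywhere in such a cube does not put the process in $\Gamma$, since $X_{t_{1}}$ may well fall in the complementary part of the cube. One cannot repair this by pushing $\mu_{0}$ closer to $1$: a closed set $\Gamma$ with $|\Gamma|\ge\mu_{0}|Q|$ can have empty interior (a fat-Cantor-type set), so no small cube of any fixed size is entirely contained in $\Gamma$, and Corollary \ref{cor:hit-prob} --- which is a support-theorem-type estimate for hitting \emph{cubes}, i.e.\ open sets --- gives no information about hitting a positive-measure closed set of this kind. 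This is exactly the point where the Krylov--Safonov mechanism needs a quantitative occupation-time input rather than a covering-plus-support argument, and your proposal contains no such ingredient.

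The paper's proof supplies it as follows. After rescaling to $\rho=1$, $t_{0}=0$, it works on the interior region $Q^{\delta}=\{(s,y)\in Q:\sqrt{y^{i}}\ge\delta\ \forall i\}$, where the generator is uniformly nondegenerate after normalizing each coordinate by the width $E^{i}$ of $Q^{\delta}$. Two estimates are combined: (i) Lemma \ref{lem:pre-est}(a) gives the lower bound $\E^{x}[\ext_{Q^{\delta}}]\ge\kappa_{1}\delta^{2}/2$ for starting points $x\in K(x_{0},7/8)\cap[4\delta^{2},\infty)^{n}$; (ii) Krylov's $L^{n+1}$ occupation-time estimate (\citep[Theorem 2.2.2]{Krylov1980Controlled}), applied to the normalized process with the determinant bound (\ref{eq:large-3}), shows that the expected time spent in $Q^{\delta}\setminus\Gamma$ before exiting is at most $C(1-\mu)^{1/(n+1)}$ times an explicit constant, hence at most $\kappa_{1}\delta^{2}/4$ once $\mu$ is close enough to $1$ as in (\ref{eq:9-4}). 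Comparing (i) and (ii) forces $\Prob^{x}[\hit_{\Gamma}<\ext_{Q^{\delta}}]\ge\kappa_{1}\delta^{2}/4$: the process cannot account for its exit time while avoiding $\Gamma$. This is the mechanism that converts ``$\Gamma$ has nearly full measure'' into an actual hit, and it has no analogue in your proposal. Your Case-2-type idea (using Proposition \ref{prop:smallcube} to move a starting point near $\partial\R_{+}^{n}$ into the interior region within a short time, then applying the interior estimate by the Markov property) does match the paper's second step, and your concern about boundary cubes is legitimate, but without the occupation-time estimate the first and essential step is missing.
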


\begin{rem*}
The constants $\mu_{0}$ and $\varepsilon_{0}$ actually depend additionally
on $n$ and $\bd$. Here we only emphasize their dependence on $\theta$
for convenience.
\end{rem*}
\begin{proof}
According to Remark \ref{rem:secaling} (2) we may assume $t_{0}=0$
and $\rho=1$ without loss of generality.

Denote $Q=Q_{\theta}(0,x_{0},1)$ and $\mu=|\Gamma\cap Q|/|Q|$. Let
$\delta\leq1/8$ be a constant specified later in (\ref{eq:large-11}),
and denote
\[
Q^{\delta}:=\{(s,y)\in Q\,|\,\sqrt{y^{i}}\geq\delta,i=1,\cdots,n\}.
\]
We consider two cases in terms of the location of initial point $x$. 

\medskip

\textbf{Case 1}: $x\in K(x_{0},7/8)\cap[4\delta^{2},\infty)^{n}$.

Applying Lemma \ref{lem:pre-est}(a) to $X^{i}$ ($i=1,\dots,n$)
with $\rho=\delta$, there is a small positive number $\kappa_{1}=\kappa(\frac{1}{2n},1,\bd)>0$
such that 
\[
\Prob^{x}\Big[\sup_{0\le t\le\kappa_{1}\delta^{2}}\sup_{i=1,2,\ldots,n}|\sqrt{X_{t}^{i}}-\sqrt{x^{i}}|\ge\delta\Big]\le\frac{1}{2}.
\]
Since $|\sqrt{y}-\sqrt{x}|\leq\delta$ implies $y\in K(x_{0},1)$,
if we require 
\begin{equation}
\kappa_{1}\delta^{2}\leq\theta,\label{eq:delta-1}
\end{equation}
then 
\begin{equation}
\E^{x}[\ext_{Q^{\delta}}]\geq\kappa_{1}\delta^{2}\Prob^{x}\Big[\sup_{0\le t\le\kappa_{1}\delta^{2}}\sup_{i=1,2,\ldots,n}|\sqrt{X_{t}^{i}}-\sqrt{x^{i}}|<\delta\Big]\geq\frac{\kappa_{1}\delta^{2}}{2}.\label{eq:3-0}
\end{equation}
So in this case we choose
\begin{equation}
\delta\leq\min\big\{\sqrt{\theta/\kappa_{1}},1/8\big\}<1.\label{eq:large-2}
\end{equation}

Now we normalize the process $X$ as follows:
\[
\hat{X}_{t}^{i}:=X_{\theta t}^{i}/E^{i},\qquad i=1,2,\ldots,n,
\]
where 
\[
E^{i}:=\begin{cases}
(\sqrt{x_{0}^{i}}+1)^{2}-(\sqrt{x_{0}^{i}}-1)^{2}=4\sqrt{x_{0}^{i}}, & {\rm if}\,(\sqrt{x_{0}^{i}}-1)^{+}\geq\delta,\\
(\sqrt{x_{0}^{i}}+1)^{2}-\delta^{2}, & {\rm if}\,(\sqrt{x_{0}^{i}}-1)^{+}<\delta
\end{cases}
\]
is the width of $Q^{\delta}$ along the $i$-th coordinate direction.
Correspondingly, we do a change of variables $\hat{x}:=(x^{i}/E^{i})_{i=1}^{n}$.
Evidently, $\hat{X}$ satisfies SDE (\ref{eq:SDE}) with $\hat{b}^{i}(x):=(E^{i})^{-1}\theta b^{i}(E^{i}x)$
and $\hat{\sigma}^{ik}(x):=(E^{i})^{-\frac{1}{2}}\theta^{\frac{1}{2}}\sigma^{ik}(E^{i}x)$
instead of $b^{i}$ and $\sigma^{ik}$, respectively, for $i=1,\ldots,n$
and $k=1,2,\ldots$, and with $\hat{W}_{t}=\theta^{-\frac{1}{2}}W_{\theta t}$
instead of $W_{t}$. For any set $G\subset[0,\infty)\times\R_{+}^{n}$,
denote
\[
\hat{G}:=\{(\theta^{-1}t,\hat{x}):\hat{x}^{i}=x^{i}/E^{i},\,(t,x)\in G\}.
\]
Then one has
\[
\theta^{-1}\ext_{G}=\ext_{\hat{G}}:=\inf\{t\ge0:\hat{X}_{s}^{0,\hat{x}}\in\hat{G}\}.
\]
Moreover, a simple computation shows that, for any $\hat{x}\in\widehat{Q^{\delta}}$,
\begin{align}
|\hat{b}^{i}(\hat{x})| & \le2\bd,\nonumber \\
\widehat{A}(\hat{x}) & \coloneqq\big(\langle\hat{\sigma}^{i},\hat{\sigma}^{j}\rangle\sqrt{\hat{x}^{i}\hat{x}^{j}}\big)_{i,j=1}^{n}\label{eq:large-3}\\
 & =\Big(\theta\langle\sigma^{i},\sigma^{j}\rangle\frac{\sqrt{x^{i}x^{j}}}{E^{i}E^{j}}\Big)_{i,j=1}^{n}>\frac{\theta\bd^{-1}\delta^{2}}{64}I_{n}.\nonumber 
\end{align}

Now applying \citep[Theorem 2.2.2]{Krylov1980Controlled} to $\hat{X}_{t}$
on $\widehat{Q^{\delta}}$ with $F(c,a)=c$, $c_{t}=2\bd$ and $g=\mathbf{1}_{\widehat{Q^{\delta}\backslash\Gamma}}$,
we have
\begin{align}
 & \E^{\hat{x}}\int_{0}^{\ext_{\widehat{Q^{\delta}}}}\exp(-2\bd s)\big({\rm det}\,\widehat{A}\big)^{\frac{1}{n+1}}\mathbf{1}_{\widehat{Q^{\delta}\backslash\Gamma}}(s,\hat{X}_{s})\vd s\label{eq:3-1}\\
 & \leq C_{0}\|\mathbf{1}_{\widehat{Q^{\delta}\backslash\Gamma}}\|_{L^{n+1}}\leq C_{0}\|\mathbf{1}_{\widehat{Q\backslash\Gamma}}\|_{L^{n+1}}\le C_{0}[(1-\mu)\big|\hat{Q}|]^{\frac{1}{n+1}}\nonumber \\
 & \leq C_{0}\big(2^{n}\big|\widehat{Q^{\delta}}\big|\big)^{\frac{1}{n+1}}(1-\mu)^{\frac{1}{n+1}}=C_{0}2^{\frac{n}{n+1}}(1-\mu)^{\frac{1}{n+1}},\nonumber 
\end{align}
where the constant $C_{0}=C_{0}(n)>1$. (\ref{eq:large-3}) shows
for any $s\in[0,\ext_{\widehat{Q^{\delta}}}(\omega)]$,
\[
{\rm det}\big(\widehat{A}(\hat{X}_{s}(\omega))\big)\geq\Big(\frac{\theta\bd}{64}\Big)^{n}\delta^{2n},
\]
which combining with (\ref{eq:3-1}) and 
\begin{align*}
\E^{x}\big[\ext_{Q^{\delta}};\hit_{\Gamma}\geq\ext_{Q^{\delta}}\big] & \leq\E^{x}\int_{0}^{\ext_{Q^{\delta}}}\mathbf{1}_{Q^{\delta}\backslash\Gamma}(s,X_{s})\vd s\\
 & =\E^{\hat{x}}\int_{0}^{\ext_{\widehat{Q^{\delta}}}}\mathbf{1}_{\widehat{Q^{\delta}\backslash\Gamma}}(s,\hat{X}_{s})\vd s
\end{align*}
implies that
\[
\me^{-2\bd}\Big(\frac{\theta}{64\bd}\Big)^{\frac{n}{n+1}}\delta^{\frac{2n}{n+1}}\E^{x}\big[\ext_{Q^{\delta}};\hit_{\Gamma}\geq\ext_{Q^{\delta}}\big]\leq C_{0}2^{\frac{n}{n+1}}(1-\mu)^{\frac{1}{n+1}}.
\]
If choosing $\mu\in(0,1)$ to satisfy 
\begin{align}
(1-\mu)\delta^{-(4n+2)} & \leq C_{0}^{-(n+1)}(128\bd)^{-n}\me^{-2(n+1)\bd}\big(\frac{\kappa_{1}}{4}\big)^{n+1}\theta^{n}=:M(\theta)\label{eq:9-4}
\end{align}
we have that
\[
\E^{x}\big[\ext_{Q^{\delta}};\hit_{\Gamma}\geq\ext_{\hat{Q}^{\delta}}\big]\leq\frac{\kappa_{1}}{4}\delta^{2}.
\]
Noticing that $\ext_{Q^{\delta}}\le1$ and (\ref{eq:3-0}), we compute
that
\begin{align*}
\frac{\kappa_{1}}{2}\delta^{2} & \le\E^{x}\big[\ext_{Q^{\delta}}\big]\\
 & =\E^{x}\big[\ext_{Q^{\delta}};\hit_{\Gamma}<\ext_{Q^{\delta}}\big]+\E^{x}\big[\ext_{Q^{\delta}};\hit_{\Gamma}\geq\ext_{Q^{\delta}}\big]\\
 & \le\Prob^{x}[\hit_{\Gamma}<\ext_{Q^{\delta}}]+\frac{\kappa_{1}}{4}\delta^{2}.
\end{align*}
Therefore, we gain that
\[
\Prob^{x}[\hit_{\Gamma}<\ext_{Q}]\geq\Prob^{x}[\hit_{\Gamma}<\ext_{Q^{\delta}}]\geq\frac{\kappa_{1}}{4}\delta^{2},
\]
provided $|\Gamma|\geq\mu|Q|$ with $\mu$ satisfying (\ref{eq:9-4}).

\medskip

\textbf{Case 2}: $x\in K(x_{0},3/4)$.

The idea is to prove that $X$ will enter $K(x_{0},7/8)\cap[4\delta^{2},\infty)^{n}$
in a short time before it leaves $K(x_{0},1)$. Then one can make
use of the result in Case 1 to estimate the hitting probability.

Letting $l=2\delta$, one can choose $z\in K(x_{0},1)$ satisfying
\begin{align*}
 & K(z,2l)\subset K(x_{0},1),\\
 & 2l\leq\min_{i}\sqrt{z^{i}},\\
{\rm and}\, & \max_{i}\big|\sqrt{x^{i}}-\sqrt{z^{i}}\big|\leq2l.
\end{align*}
From Lemma (\ref{prop:smallcube}), there is a constant $M_{\ref{prop:smallcube}}=M_{\ref{prop:smallcube}}(c=1,\epsilon=\theta,\alpha=1,\beta=2,r=\frac{3}{4},\bd)$,
such that 
\begin{equation}
\Prob^{x}\big[X_{\theta l^{2}}\in K(z,\frac{3l}{4});\,X_{t}\in K(x_{0},1)\,\forall t\in[0,\theta l^{2}]\big]\geq M_{\ref{prop:smallcube}}.\label{eq:large-7}
\end{equation}
Obviously, $K(z,\frac{3l}{4})\subset K(x,\frac{7}{8})\cap[4\delta^{2},\infty)^{n}$.

Now we apply the result obtained in Case 1 with $\tilde{Q}:=Q_{\theta(1-l^{2})}(\theta l^{2},x_{0},1)$
instead of $Q=Q_{\theta}(0,x_{0},1)$. Then, if $\Gamma$ satisfies
\begin{align}
|\Gamma\cap\tilde{Q}| & \geq\big[1-M\big(\theta(1-l^{2})\big)\delta^{4n+2}\big]|\tilde{Q}|\label{eq:9-9}
\end{align}
(where $M(\cdot)$ is defined in (\ref{eq:9-4})), one has
\begin{equation}
\Prob^{\theta l^{2},z}\big[\hit_{\Gamma}\leq\ext_{\tilde{Q}}\big]\geq\frac{\kappa_{1}}{4}\delta^{2}.\label{eq:9-10}
\end{equation}
Then by (\ref{eq:large-7}) and (\ref{eq:9-10}), we derive that
\begin{align*}
 & \Prob^{x}\big[\hit_{\Gamma}<\ext_{Q}]\\
 & \ge\Prob^{x}\big[\hit_{\Gamma}<\ext_{Q};\big\{ X_{\theta l^{2}}\in[4\delta^{2},\infty)^{n}\cap K(x_{0},7/8)\big\}\big]\\
 & =\E^{x}\Big[\Prob^{(\theta l^{2},X_{\theta l^{2}})}\big[\hit_{\Gamma}\leq\ext_{\tilde{Q}}\big];\big\{ X_{\theta l^{2}}\in[4\delta^{2},\infty)^{n}\cap K(x_{0},7/8)\big\}\Big]\\
 & \ge\frac{\kappa_{1}}{4}\delta^{2}\Prob^{x}\big\{ X_{\theta l^{2}}\in[4\delta^{2},\infty)^{n}\cap K(x_{0},7/8)\big\}\\
 & \geq\frac{\kappa_{1}}{4}\delta^{2}M_{\ref{prop:smallcube}}\\
 & \eqqcolon\eps.
\end{align*}
Due to the change of parameters from $Q_{\theta}(0,x_{0},1)$ to $Q_{\theta(1-l^{2})}(\theta l^{2},x_{0},1)$,
we should update the choice of the constant $\delta$:
\begin{equation}
\delta=\min\big\{\sqrt{\theta/(\kappa_{1}+4)},1/8\big\}\label{eq:large-11}
\end{equation}
to ensure the relation $\kappa_{1}\delta^{2}\leq\theta(1-l^{2})=\theta(1-4\delta^{2})$,
corresponding to (\ref{eq:delta-1}).

To conclude the proof, it suffices to choose a proper $\mu\in(0,1)$
so that the condition (\ref{eq:9-9}) is satisfied. Using the condition
$|\Gamma\cap Q|\geq\mu|Q|$, we compute that
\begin{align*}
\frac{|\Gamma\cap\tilde{Q}|}{|\tilde{Q}|}= & \frac{|\Gamma\cap Q|-\big|\Gamma\cap(Q-\tilde{Q})\big|}{|Q|-|Q-\tilde{Q}|}\\
\geq & \frac{\mu|Q|-l^{2}|Q|}{|Q|-l^{2}|Q|}\geq\frac{\mu-l^{2}}{1-l^{2}}.
\end{align*}
So the condition (\ref{eq:9-9}) is satisfied if 
\[
\frac{\mu-l^{2}}{1-l^{2}}=1-M\big(\theta(1-l^{2})\big)\delta^{4n+2},
\]
that is, 
\[
\mu=\mu_{0}:=1-(1-l^{2})M\big(\theta(1-l^{2})\big)\delta^{4n+2}\in(0,1).
\]
The proof is complete.
\end{proof}

\section{\label{sec:small}Proof of Theorem \ref{thm:main2}}

In terms of rescaling and translation (see Remark \ref{rem:secaling}
above), we may assume $\rho=1$ and $t=0$. Fix $\theta\in(0,1]$
and denote $Q:=Q_{\theta}(0,x_{0},1)$. 

\subsection{When $Q$ is regular}

In this case we shall prove the assertion of Theorem \ref{thm:main2}
for any initial point $x\in K(x_{0},3/4)$ instead of $x\in K(x_{0},1/6)$. 

Now we define a non-decreasing function $\eps(\cdot):(0,1)\to[0,1]$
as
\begin{align}
\eps(\mu)= & \inf\Big\{\Prob^{x}[\hit_{\Gamma}<\ext_{Q}]\big|x_{0}\in\R_{+}^{n},\ x\in K(x_{0},3\rho/4),\,\label{eq:reg-1}\\
 & \qquad\qquad\tilde{Q}:=Q_{\theta}(0,x_{0},\rho)\:{\rm is}\:{\rm regular},\:\Gamma\subset\tilde{Q},\ |\Gamma|>\mu|\tilde{Q}|,\:\rho\in(0,1]\Big\},\nonumber 
\end{align}
and denote
\[
\underbar{\ensuremath{\mu}}:=\inf\{\mu:\eps(\mu)>0\}.
\]
Obviously, $\underbar{\ensuremath{\mu}}\le\mu_{0}$ where $\mu_{0}$
is the constant determined by Proposition \ref{prop:large target-1}.
If $\underbar{\ensuremath{\mu}}=0$, Theorem \ref{thm:main2} is automatically
concluded. So we suppose $\underbar{\ensuremath{\mu}}>0$ and aim
to deduce a contradiction. 

Define
\[
\left\{ \begin{aligned}q & :=\min\big\{(\mu_{0}/\underbar{\ensuremath{\mu}})^{\frac{1}{2}},\,\mu_{0}^{-\frac{1}{12}}\big\}>1,\\
d_{1} & :=\frac{1}{2}\vee(1+q\underbar{\ensuremath{\mu}}-q^{2}\underbar{\ensuremath{\mu}})^{\frac{1}{2n+2}}\\
\eta_{1} & :=(\mu_{0})^{-\frac{1}{2}}-1,\\
\alpha_{1} & :=4\eta_{1}^{-1}+1,\\
\beta_{1} & :=3,\\
r_{1} & :=d_{1},
\end{aligned}
\right.\quad\left\{ \begin{aligned}\underbar{\ensuremath{\rho}} & :=\frac{1}{4}(1-\mu_{0}^{\frac{1}{2}})\sqrt{q^{-1}\underbar{\ensuremath{\mu}}},\\
\epsilon_{2} & :=\frac{1-d_{2}^{2}}{d_{2}^{2}}\theta\\
\alpha_{2} & :=\frac{1-d_{2}^{2}\underbar{\ensuremath{\rho}}^{2}}{\underbar{\ensuremath{\rho}}^{2}d_{2}^{2}},\\
\beta_{2} & :=\frac{2}{\underbar{\ensuremath{\rho}}d_{2}},\\
r_{2} & :=3/4,
\end{aligned}
\right.
\]
where $d_{2}\in(0,1)$ is a root of equation $\big(q^{2}\underbar{\ensuremath{\mu}}+d_{2}^{2n+2}-1\big)d_{2}^{-n-2}=q\underbar{\ensuremath{\mu}}$,
and keep in mind that
\[
\underbar{\ensuremath{\mu}}<q\underbar{\ensuremath{\mu}}<q^{2}\underbar{\ensuremath{\mu}}<\min\{\mu_{0},q^{-1}\underbar{\ensuremath{\mu}}\mu_{0}^{-\frac{1}{4}}\}<1.
\]
 The roles of the constants will be clear later. 

As $q^{-1}\underbar{\ensuremath{\mu}}<\underbar{\ensuremath{\mu}}$,
from the definition of $\underbar{\ensuremath{\mu}}$ there exist
$x_{0}\in\mathbb{R}_{+}^{n}$, $x\in K(x_{0},3/4)$, and
\[
\Gamma\subset Q:=Q_{\theta}(0,x_{0},1)
\]
with $q^{-1}\underbar{\ensuremath{\mu}}<|\Gamma|/|Q|<\underbar{\ensuremath{\mu}}$,
such that
\begin{equation}
\Prob^{x}\big(\hit_{\Gamma}<\ext_{Q}\big)<\eps(q\underbar{\ensuremath{\mu}})\min\big\{\eps(\mu_{0})M_{\ref{cor:hit-prob}}(c,\theta,\alpha_{1},\beta_{1},r_{1},\bd),M_{\ref{cor:hit-prob}}(c,\epsilon_{2},\alpha_{2},\beta_{2},r_{2},\bd)\big\},\label{eq:8-0-0}
\end{equation}
where $M_{\ref{cor:hit-prob}}$ is taken from Corollary \ref{cor:hit-prob}.

Applying Lemma \ref{lem:measure th} with $\mu'=q^{-1}\underbar{\ensuremath{\mu}}$,
$\mu=\mu_{0}$ and $\eta=\eta_{1}=\mu_{0}^{-1/4}-1$, and noting that
$\min\{\mu^{-1/4}\mu',\mu\}>q^{2}\underbar{\ensuremath{\mu}}$, we
have two cases: \textbf{Case I}:
\begin{equation}
|D_{2}\cap Q|\geq q^{2}\underbar{\ensuremath{\mu}}|Q|,\label{eq:8-0-1}
\end{equation}
 or \textbf{Case II}:
\[
|Q_{\theta}(\check{t},\check{z},\check{\rho})\cap\Gamma|\geq q^{2}\underbar{\ensuremath{\mu}}|Q_{\theta}(\check{t},\check{z},\check{\rho})|
\]
 for some regular hypercube $Q_{\theta}(\check{t},\check{z},\check{\rho})\subset Q$,
where $\check{\rho}\geq\underbar{\ensuremath{\rho}}=\frac{1}{4}(1-\mu_{0}^{\frac{1}{2}})\sqrt{q^{-1}\underbar{\ensuremath{\mu}}}$. 

We discuss the two cases separately.

\medskip

\textbf{Case I}. Let $\tilde{Q}:=[(1-d_{1}^{2})\theta,\theta)\times K(x_{0},d_{1})$
with $d_{1}=(1/2)\vee(1+q\underbar{\ensuremath{\mu}}-q^{2}\underbar{\ensuremath{\mu}})^{\frac{1}{2n+2}}<1$.
A simple computation yields
\begin{equation}
\big|\tilde{Q}\big|=\prod_{i=1}^{n}\frac{\big(\sqrt{x_{0}^{i}}+d_{1}\big)^{2}-\Big(\big(\sqrt{x_{0}^{i}}-d_{1}\big)\vee0\Big)^{2}}{\big(\sqrt{x_{0}^{i}}+1\big)^{2}-\Big(\big(\sqrt{x_{0}^{i}}-1\big)\vee0\Big)^{2}}\cdot d_{1}^{2}\times|Q|\geq d_{1}^{2n+2}|Q|.\label{eq:8-0-2}
\end{equation}
 Let $E:=D_{2}\cap\tilde{Q}\subset Q$. Then using (\ref{eq:8-0-1})
one has
\begin{eqnarray*}
|E| & \geq & |D_{2}\cap Q|+|\tilde{Q}|-|Q|\\
 & \geq & (q^{2}\underbar{\ensuremath{\mu}}+d_{1}^{2n+2}-1)|Q|\\
 & \ge & q\underbar{\ensuremath{\mu}}|Q|.
\end{eqnarray*}
By definition of $\eps(\cdot)$, one knows that for any $x\in K(x_{0},3/4)$,
\begin{equation}
\Prob^{x}\big[\hit_{E}<\ext_{Q}\big]\geq\eps(q\underbar{\ensuremath{\mu}}).\label{eq:8-0}
\end{equation}

Next we estimate the hitting probability when $X$ starts from the
set $E$. By the construction of $D_{2}$, one knows that, for any
$(s,y)\in E=D_{2}\cap\tilde{Q}$ and $\eta_{1}=\mu_{0}^{-1/2}-1$,
there is a regular hypercube $Q_{\theta}(t_{1},x_{1},\rho_{1})\subset Q$
such that 
\[
(s,y)\in[(t_{1}-(4\eta_{1}^{-1}+1)\theta\rho_{1}^{2},t_{1}-\theta\rho_{1}^{2})\times K(x_{1},3\rho_{1})]\cap\tilde{Q}
\]
and 
\begin{equation}
|Q_{\theta}(t_{1},x_{1},\rho_{1})\cap\Gamma|\geq\mu_{0}|Q_{\theta}(t_{1},x_{1},\rho_{1})|.\label{eq:8-1}
\end{equation}
Applying Corollary \ref{cor:hit-prob} with 
\[
c=1,\quad\epsilon_{1}=\theta,\quad\alpha_{1}=4\eta_{1}^{-1}+1,\quad\beta_{1}=3,\quad r_{1}=d_{1},
\]
and noticing $t_{1}\in[s+\theta\rho_{1}^{2},s+(4\eta^{-1}+1)\rho_{1}^{2}]$,
one obtains that
\begin{equation}
\Prob^{s,y}\big[X_{t_{1}}\in K(x_{1},3\rho_{1}/4),t_{1}\le\ext_{Q}\big]\geq M_{\ref{cor:hit-prob}}(c,\theta,\alpha_{1},\beta_{1},r_{1},\bd).\label{eq:8-2}
\end{equation}
Moreover, from (\ref{eq:8-1}) and the definition of $\eps(\cdot)$,
for any $x_{1}'\in K(x_{1},3\rho_{1}/4)$ we have 
\begin{equation}
\Prob^{t_{1},x_{1}'}\big[\hit_{\Gamma}<\ext_{Q}\big]\geq\eps(\mu_{0}).\label{eq:8-3}
\end{equation}
Combining (\ref{eq:8-2}) and (\ref{eq:8-3}), one has
\begin{eqnarray*}
\Prob^{s,y}\big[\hit_{\Gamma}<\ext_{Q}\big] & \geq & \E^{s,y}\big[\Prob^{t_{1},X_{t_{1}}}\big(\hit_{\Gamma}<\ext_{Q}\big);\{X_{t_{1}}\in K(x_{1},3\rho_{1}/4),\ext_{Q}>t_{1}\}\big]\\
 & \geq & \eps(\mu_{0})\Prob^{s,y}\big[X_{t_{1}}\in K(x_{1},3\rho_{1}/4),\ext_{Q}>t_{1}\big]\\
 & \geq & \eps(\mu_{0})M_{\ref{cor:hit-prob}}(c,\theta,\alpha_{1},\beta_{1},r_{1},\bd).
\end{eqnarray*}

Using the above relation and (\ref{eq:8-0}), we compute that
\begin{eqnarray*}
\Prob^{x}\big[\hit_{\Gamma}<\ext_{Q}\big] & \geq & \Prob^{x}\big[\hit_{E}<\hit_{\Gamma}<\ext_{Q}\big]\\
 & \geq & \E^{x}\big[\Prob^{\hit_{E},X_{\hit_{E}}}\big(\hit_{\Gamma}<\ext_{Q}\big);\hit_{E}<\ext_{\Gamma}\big]\\
 & \geq & \eps(\mu_{0})M_{\ref{cor:hit-prob}}(c,\theta,\alpha_{1},\beta_{1},r_{1},\bd)\Prob^{0,x}\big[\hit_{E}<\ext_{Q}\big]\\
 & \geq & \eps(q\underbar{\ensuremath{\mu}})\eps(\mu_{0})M_{\ref{cor:hit-prob}}(c,\theta,\alpha_{1},\beta_{1},r_{1},\bd),
\end{eqnarray*}
which contradicts (\ref{eq:8-0-0}).

\medskip

\textbf{Case II.} This case is relatively simple. Let $\tilde{Q}:=[\check{t}+\theta(1-d_{2}^{2})\check{\rho}^{2},\check{t}+\theta\check{\rho}^{2}]\times K(\check{z},d_{2}\check{\rho})$,
where $d_{2}\in(0,1)$ is a root of equation $\big(q^{2}\underbar{\ensuremath{\mu}}+d_{2}^{2n+2}-1\big)d_{2}^{-n-2}=q\underbar{\ensuremath{\mu}}$.
It is easy to verify that $\tilde{Q}$ is regular if $Q$ is regular,
and
\[
d_{2}^{-n-2}|Q_{\theta}(\check{t},\check{z},\check{\rho})|\geq|\tilde{Q}|\geq d_{2}^{2n+2}|Q_{\theta}(\check{t},\check{z},\check{\rho})|.
\]
So we have 
\begin{eqnarray*}
|\Gamma\cap\tilde{Q}| & \geq & |\Gamma\cap Q_{\theta}(\check{t},\check{z},\check{\rho})|-|Q_{\theta}(\check{t},\check{z},\check{\rho})\backslash\tilde{Q}|\\
 & \geq & q^{2}\underbar{\ensuremath{\mu}}|Q(\check{t},\check{z},\check{\rho})|-(1-d_{2}^{2n+2})|Q_{\theta}(\check{t},\check{z},\check{\rho})|\\
 & \geq & \big(q^{2}\underbar{\ensuremath{\mu}}+d_{2}^{2n+2}-1\big)|Q_{\theta}(\check{t},\check{z},\check{\rho})|\\
 & \geq & \big(q^{2}\underbar{\ensuremath{\mu}}+d_{2}^{2n+2}-1\big)d_{2}^{-n-2}|\tilde{Q}|=q\underbar{\ensuremath{\mu}}|\tilde{Q}|.
\end{eqnarray*}
According to (\ref{eq:reg-1}), we have that, for any $z'\in K(\check{z},3d_{2}\check{\rho}/4)$,
\[
\Prob^{\check{t}+\theta(1-d_{2}^{2})\check{\rho}^{2},z'}\big[\hit_{\Gamma}<\ext_{Q}\big]\geq\Prob^{\check{t}+\theta(1-d_{2}^{2})\check{\rho}^{2},z'}\big[\hit_{\Gamma}<\ext_{\tilde{Q}}\big]\geq\eps(q\underbar{\ensuremath{\mu}}).
\]
Applying Corollary \ref{cor:hit-prob} on $[0,\alpha_{2}l_{2}^{2}]$
with
\[
c=1,\quad l_{2}=\check{\rho}d_{2},\quad\epsilon_{2}=\frac{1-d_{2}^{2}}{d_{2}^{2}}\theta,\quad\alpha_{2}=\frac{1-d_{2}^{2}\underbar{\ensuremath{\rho}}^{2}}{\underbar{\ensuremath{\rho}}^{2}d_{2}^{2}}\theta,\quad\beta_{2}=\frac{2}{\underbar{\ensuremath{\rho}}d_{2}},\quad r_{2}=\frac{3}{4},
\]
and noticing $\check{t}+\theta(1-d_{2}^{2})\check{\rho}^{2}\in[\epsilon_{2}l_{2}^{2},\alpha_{2}l_{2}^{2}]$,
we have that, for any $x\in K(x_{0},3/4)$, 
\begin{align*}
 & \Prob^{x}\big[\hit_{\Gamma}<\ext_{Q}\big]\\
\geq\, & \E^{x}\bigg[\begin{array}{c}
\Prob^{\check{t}+\theta(1-d_{2}^{2})\check{\rho}^{2},X_{\check{t}+\theta(1-d_{2}^{2})\check{\rho}^{2}}}\big[\hit_{\Gamma}<\ext_{Q}\big];\\
X_{\check{t}+\theta(1-d_{2}^{2})\check{\rho}^{2}}\in K(\check{z},3d_{2}\check{\rho}/4),\ext_{Q}>\check{t}+\theta\check{\rho}^{2}(1-d_{2}^{2})
\end{array}\bigg]\\
\geq\, & \eps(q\underbar{\ensuremath{\mu}})\Prob^{x}\Big[X_{\check{t}+\theta(1-d_{2}^{2})\check{\rho}^{2}}\in K(\check{z},3d_{2}\check{\rho}/4),\ext_{Q}>\check{t}+\theta(1-d_{2}^{2})\check{\rho}^{2}\Big]\\
\geq\, & \eps(q\underbar{\ensuremath{\mu}})M_{\ref{cor:hit-prob}}(c,\epsilon_{2},\alpha_{2},\beta_{2},r_{2},\bd),
\end{align*}
which also contradicts (\ref{eq:8-0-0}). Therefore, Theorem \ref{thm:main2}
is proved if $Q$ is regular.

\subsection{When $Q$ is not regular}

The idea is to shift and shrink $Q=Q_{\theta}(0,x_{0},1)$ properly
so the new \emph{$Q_{\theta}(0,\widehat{x}_{0},2/3)$ is regular}
and
\[
K(x_{0},1/6)\subset K(\widehat{x}_{0},1/2),\quad Q_{\theta}(0,\widehat{x}_{0},2/3)\subset Q_{\theta}(0,x_{0},1).
\]
This can be easily realized by the following choice of $\widehat{x}_{0}$:
for each $i=1,\dots,n$,
\[
\widehat{x}_{0}^{i}=\begin{cases}
0 & \quad\text{if}\ \sqrt{x_{0}^{i}}\in[0,1/3),\\{}
[\sqrt{x_{0}^{i}}+1/3]^{2} & \quad\text{if}\ \sqrt{x_{0}^{i}}\in[1/3,1),\\
x_{0}^{i} & \quad\text{if}\ \sqrt{x_{0}^{i}}\in[1,\infty).
\end{cases}
\]
Applying the previous result to $Q_{\theta}(0,\widehat{x}_{0},2/3)$
we conclude Theorem \ref{thm:main2}. 

\section{Proof of Theorem \ref{thm:inv-meas}}

Let us first prove that the transition semigroup $P=(P_{t})_{t\ge0}$
associated with the Markov process $X$ is strongly Feller under Condition
(\textbf{C}). For any $\vf\in\mathcal{B}_{b}(\R_{+}^{n})$ and $(t,x)\in(0,1)\times\R_{+}^{n}$,
let
\begin{equation}
u(t,x):=P_{1-t}\vf(x)=\E^{x}\big[\vf(X_{1-t})\big].\label{eq:SF-01}
\end{equation}
 In view of the Markov property of $X$, one has that for any $s\in(0,t)$
and $x\in\R_{+}^{n}$,
\begin{align*}
\E^{x}\big[u(t,X_{t})\big|\mathcal{F}_{s}\big] & =\E^{x}\big[\E^{X_{t}}\big[\vf(X_{1-t})\big]\big|\mathcal{F}_{s}\big]\\
 & =\E^{x}\big[\E^{x}\big[\vf(X_{1})\big|\mathcal{F}_{t}\big]\big|\mathcal{F}_{s}\big]\\
 & =\E^{x}\big[\vf(X_{1})\big|\mathcal{F}_{s}\big]\\
 & =\E^{X_{s}}\big[\vf(X_{1-s})\big]=u(s,X_{s}),\quad\Prob^{x}\text{-a.s.}
\end{align*}
This means that $u$ satisfies Condition (\textbf{U}), and from Theorem
\ref{thm:main1}, $u(t,\cdot)$ is H\"older continuous for any $t\in(0,1)$
and so is $P_{1-t}\vf(\cdot)$. This yields the strong Feller property
of $P$. 

The main tools for existence and uniqueness of invariant probability
measures of $P$ are the Krylov-Bogoliubov existence theorem and the
Khas'minskii-Doob theorem (see \citep[Sections 4.1 and 4.2]{da1996ergodicity}).
For uniqueness we need another concept: the semigroup $P$ is said
to be \textit{irreducible} at time $t>0$ if, for arbitrary nonempty
open set $\Gamma$ and all $x\in\R_{+}^{n}$,
\[
P_{t}\bm{1}_{\Gamma}(x)=\mathbb{P}^{x}[X_{t}\in\Gamma]>0.
\]
Evidently, the irreducibility of $P$ follows from Lemma \ref{prop:smallcube}.
For existence we need the following tightness result for the law of
$X$.
\begin{lem}
\label{Lem:tight}Under the assumption of Theorem \ref{thm:inv-meas},
for each $x\in\R_{+}^{n}$, $\eps>0$ there exists a constant $N=N(\eps,\lambda,x)>0$
such that
\[
\Prob^{x}\{|X_{t}|>N\}<\eps\quad\forall\,t>0.
\]
 
\end{lem}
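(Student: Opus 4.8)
The plan is to run the standard Lyapunov (Foster--Khas'minskii) argument. I would take
\[
V(y)=1+|y|^{2}=1+\sum_{i=1}^{n}(y^{i})^{2},\qquad y\in\R_{+}^{n},
\]
which is $C^{2}$, is bounded below by $1$, and has compact sublevel sets $\{V\le R\}$. A direct computation from \eqref{eq:operator} gives
\[
\mathcal{L}V(y)=\sum_{i=1}^{n}a^{ii}(y)\,y^{i}+2\sum_{i=1}^{n}b^{i}(y)\,y^{i}
\]
(the mixed second derivatives of $V$ vanish and $\sqrt{y^{i}y^{i}}=y^{i}$ on $\R_{+}^{n}$). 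The first task is to derive from \eqref{eq:inv-meas-1} the dissipativity estimate
\begin{equation}
\mathcal{L}V(y)\le C_{1}-C_{2}V(y),\qquad y\in\R_{+}^{n},\label{eq:tight-drift}
\end{equation}
with positive constants $C_{1},C_{2}$ depending only on $n$ and $\bd$: the bound $a\le\bd I$ makes the diffusion contribution $\sum_{i}a^{ii}(y)y^{i}$ grow only linearly in $|y|$, while the drift hypothesis in \eqref{eq:inv-meas-1} supplies the negative term $-C_{2}V(y)$, the residual linear term being absorbed by Young's inequality.

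Granting \eqref{eq:tight-drift}, the second step is to deduce the uniform-in-$t$ moment bound $\sup_{t\ge 0}\E^{x}V(X_{t})<\infty$ by localization. As recalled at the start of Section~3 we may assume $X$ solves \eqref{eq:SDE}; put $\tau_{m}:=\inf\{t\ge 0:|X_{t}|\ge m\}$, which increases $\Prob^{x}$-almost surely to $\infty$ by path-continuity. Applying It\^o's formula to $\me^{C_{2}t}V(X_{t})$ — whose drift $\me^{C_{2}t}\big(C_{2}V(X_{t})+\mathcal{L}V(X_{t})\big)$ is at most $C_{1}\me^{C_{2}t}$ by \eqref{eq:tight-drift} — and stopping at $\tau_{m}$ (so that the stochastic-integral part becomes a true martingale, its integrand being bounded on $\{t\le\tau_{m}\}$ by the local boundedness of the coefficients), one gets after taking expectations
\[
\E^{x}\big[\me^{C_{2}(t\wedge\tau_{m})}V(X_{t\wedge\tau_{m}})\big]\le V(x)+\tfrac{C_{1}}{C_{2}}\big(\me^{C_{2}t}-1\big).
\]
Letting $m\to\infty$ and applying Fatou's lemma (the integrands are nonnegative and converge almost surely) yields $\E^{x}V(X_{t})\le V(x)\me^{-C_{2}t}+\tfrac{C_{1}}{C_{2}}\le V(x)+\tfrac{C_{1}}{C_{2}}=:K(x,\bd)$ for every $t\ge 0$. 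The last step is Chebyshev's inequality: for all $N>0$ and $t>0$,
\[
\Prob^{x}\{|X_{t}|>N\}=\Prob^{x}\{V(X_{t})>1+N^{2}\}\le\frac{K(x,\bd)}{1+N^{2}},
\]
so the assertion holds with $N=N(\eps,\bd,x):=\sqrt{K(x,\bd)/\eps}$.

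The only step that is not purely mechanical is the dissipativity estimate \eqref{eq:tight-drift}, and it is there that the full force of the drift hypothesis in \eqref{eq:inv-meas-1} is needed. If one used only the crude bound $|b|\le\bd$ (together with $a\le\bd I$), one would get merely $\mathcal{L}V(y)\le C(1+|y|)$, which produces a bound on $\E^{x}V(X_{t})$ that grows with $t$ and is useless for tightness uniform in $t$; what keeps $\E^{x}V(X_{t})$ bounded is precisely the mean-reversion encoded in \eqref{eq:inv-meas-1}. The remaining ingredients — the It\^o/localization argument and the Chebyshev estimate — are routine, and I expect no further obstacle.
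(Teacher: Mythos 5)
Your route differs from the paper's. The paper argues coordinate-wise with first moments only: since $X^{i}\ge0$, it writes $\E^{x}[X_{t\wedge T_{k}}^{i}]=x^{i}+\E^{x}\int_{0}^{t\wedge T_{k}}b^{i}(X_{s})\vd s$ with $T_{k}=\inf\{t>0:|X_{t}|=k\}$ (only the drift identity is needed, the martingale part having mean zero), bounds the drift by a mean-reverting estimate linear in $X^{i}$, applies Gr\"onwall, lets $k\to\infty$ by Fatou, and finishes with Chebyshev applied to $\E^{x}[X_{t}^{i}]$. Your Lyapunov argument with $V(y)=1+|y|^{2}$, the exponential weight $\me^{C_{2}t}$, localization at $\tau_{m}$, Fatou and Chebyshev is a standard, somewhat heavier alternative: it must in addition absorb the diffusion contribution $\sum_{i}a^{ii}(y)y^{i}$, which the first-moment argument never sees, but that term is only linear in $|y|$ and your absorption is fine; the It\^o/localization/Fatou/Chebyshev steps are all carried out correctly and would even give a uniform second-moment bound, which is more than the lemma asks.

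The gap is precisely the step you single out as "not purely mechanical": the dissipativity estimate $\mathcal{L}V\le C_{1}-C_{2}V$ does not follow from \eqref{eq:inv-meas-1} as printed. In \eqref{eq:inv-meas-1} the upper bound on $b^{i}$ is the constant $\bd$, while the bound that is linear in $x^{i}$, namely $b^{i}(x)\ge-\bd x^{i}$, is a lower bound and hence has the wrong sign for estimating $2\sum_{i}b^{i}(y)y^{i}$ from above; from the printed inequalities one only gets $\mathcal{L}V\le C(1+|y|)$, which you yourself observe is useless (indeed $b^{i}\equiv\bd$ is compatible with \eqref{eq:inv-meas-1} as written and gives a process with no tightness at all). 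What your estimate needs, and what the paper's own proof in fact uses (see its step $b^{i}(X_{s})\le-\lambda X_{s\wedge T_{k}}^{i}$ and the resulting bound $|x|+\lambda^{-2}$), is a mean-reversion upper bound of the form $b^{i}(x)\le C-\lambda x^{i}$; under that reading of the drift hypothesis your dissipativity inequality is immediate and the rest of your proof goes through. So your argument is sound only modulo the same reinterpretation of \eqref{eq:inv-meas-1} that the paper itself relies on; asserting that the hypothesis "supplies the negative term" without deriving it is the missing step, and you should state explicitly which form of the drift condition you are using and how the negative quadratic term is obtained from it.
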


\begin{proof}
It suffices to prove that for each $i=1,\dots,n$ we have $\Prob^{x}[X_{t}^{i}>N]<\eps$.
Define
\[
T_{k}=\inf\{t>0:|X_{t}|=k\},\quad k=1,2,\dots
\]
Then by the Fubini theorem and using (\ref{eq:inv-meas-1}) we have
\begin{align*}
\E^{x}[X_{t\wedge T_{k}}^{i}] & =x^{i}+\E^{x}\int_{0}^{t\wedge T_{k}}b^{i}(X_{s})\vd s\\
 & \le|x|+\E^{x}\int_{0}^{t}(-\lambda X_{s\wedge T_{k}}^{i})\vd s\\
 & =|x|-\lambda\int_{0}^{t}\E^{x}[X_{s\wedge T_{k}}^{i}]\vd s
\end{align*}
which along with the Gr\"onwall inequality implies
\[
\E^{x}[X_{t\wedge T_{k}}^{i}]\le|x|\me^{-\lambda t}<|x|,\quad\forall t>0.
\]
Since $X(\omega)\in C([0,\infty);\R_{+}^{n})$, $T_{k}(\omega)\uparrow\infty$
as $k\uparrow\infty$ for each $\omega\in\PS$, then by Fatou's lemma
we have $\E^{x}[X_{t}^{i}]<|x|+\lambda^{-2}$, thus 
\[
\Prob^{x}[X_{t}^{i}>N]<\big(|x|+\lambda^{-2}\big)N^{-1}
\]
from Chebyshev's inequality. The proof is then easily concluded. 
\end{proof}
Now let us complete the proof of Theorem \ref{thm:inv-meas}. According
to the Krylov-Bogoliubov theorem, existence of invariant measures
of $P$ follows from its (strong) Feller property and tightness (due
to Lemma \ref{Lem:tight}). Moreover, $P$ is irreducible due to Lemma
\ref{prop:smallcube}, which combining with the strong Feller property
yields the uniqueness (and also ergodicity) of the invariant measure
by means of the Khas'minskii-Doob theorem. The proof is complete.


\bibliographystyle{amsalpha}
\bibliography{holder}

\end{document}